\documentclass[11pt,reqno,a4paper]{amsart}

\usepackage{amsmath, amsthm, amssymb, amsfonts}
\usepackage{setspace}
\usepackage[english]{babel}
\usepackage{mathrsfs}
\usepackage[all]{xy}
\usepackage{paralist}
\usepackage{graphicx}
\usepackage{accents}
\usepackage{enumerate}
\usepackage{mathtools}

\usepackage[dvipsnames]{xcolor}
\usepackage[colorlinks=true,linkcolor=OrangeRed,urlcolor=MidnightBlue,citecolor=OrangeRed]{hyperref}

\usepackage[margin=2.5cm,top=2.5cm,bottom=2cm]{geometry}

\makeatletter
\newtheorem*{rep@theorem}{\rep@title}
\newcommand{\newreptheorem}[2]{%
\newenvironment{rep#1}[1]{%
 \def\rep@title{#2~\ref{##1}}%
 \begin{rep@theorem}}%
 {\end{rep@theorem}}}
 
\makeatother

\theoremstyle{plain}
\newtheorem{theorem}{\normalfont \scshape Theorem}[section]
\newtheorem{lemma}[theorem]{\normalfont \scshape Lemma}
\newtheorem{proposition}[theorem]{\normalfont \scshape Proposition}
\newtheorem{corollary}[theorem]{\normalfont \scshape Corollary}

\newreptheorem{theorem}{Theorem}
\newreptheorem{proposition}{Proposition}

\theoremstyle{definition}

\theoremstyle{remark}
\newtheorem{remark}[theorem]{\normalfont \scshape Remark}

\renewenvironment{proof}[1][\proofname]{\noindent{\scshape #1}.\quad}{\qed}

\numberwithin{equation}{section}

\def\XXint#1#2#3{{\setbox0=\hbox{$#1{#2#3}{\int}$ }
\vcenter{\hbox{$#2#3$ }}\kern-.6\wd0}}

\newcommand{\bbI}{\mathbb I}

\newcommand{\bbR}{\mathbb R}

\newcommand{\calC}{\mathcal C}
\newcommand{\calD}{\mathcal D}

\newcommand{\calH}{\mathcal H}
\newcommand{\calI}{\mathcal I}
\newcommand{\calJ}{\mathcal J}

\newcommand{\calP}{\mathcal P}

\newcommand{\bp}{\begin{pmatrix}}
\newcommand{\ep}{\end{pmatrix}}
\newcommand{\p}{\partial}

\newcommand{\R}{\mathbb{R}}

\newcommand{\T}{\mathbb{T}}

\newcommand{\dx}{\textnormal{d}x}

\newcommand{\dv}{\textnormal{d}v}

\newcommand{\ddt}{\frac{\textnormal{d}}{\textnormal{d}t}}

\newcommand{\lt}{\left}
\newcommand{\rt}{\right}
\newcommand{\pa}{\partial}

\newcommand{\iinttr}{\iint_{\T^d\times\R^d}}
\newcommand{\intt}{\int_{\T^d}}
\newcommand{\intr}{\int_{\R^d}}

\newcommand{\e}{\varepsilon}

\date{\today}

\begin{document}

\title[Conditional hypocoercivity]{Conditional hypocoercivity for  nonlinear kinetic Fokker--Planck equations}

\author{Jos\'e A. Carrillo}
\address{Mathematical Institute, University of Oxford,  Oxford OX2 6GG, United Kingdom}
\email{jose.carrillo@maths.ox.ac.uk}

\author{Dowan Koo}
\address{Mathematical Institute, University of Oxford,  Oxford OX2 6GG, United Kingdom}
\email{dowan.koo@maths.ox.ac.uk}

\author{Sihyun Song}
\address{Department of Mathematics, Yonsei University, Seoul 03722, Republic of Korea}
\email{ssong@yonsei.ac.kr}

\date{\today}

\subjclass{82C40, 35Q84, 35B40}
\keywords{Hypocoercivity, relative entropy}
\thanks{JAC was supported by the Advanced Grant Nonlocal-CPD (Nonlocal PDEs for Complex Particle Dynamics: Phase Transitions, Patterns and Synchronization) of the European Research Council Executive Agency (ERC) under the European Union Horizon 2020 research and innovation programme (grant agreement No. 883363) and partially supported by the EPSRC EP/V051121/1. JAC was partially supported by the ``Maria de Maeztu'' Excellence Unit IMAG, reference CEX2020-001105-M, funded by MCIN\slash AEI \slash10.13039\slash501100011033\slash.  DK was supported by NRF grant no. RS-2025-02312778. SS was  supported by NRF grants no. 2022R1A2C1002820 and RS-2024-00406821. SS acknowledges support from the Presidential Science Scholarship provided by the Korea Student Aid Foundation.}

\begin{abstract}
We investigate the long-time behaviour of nonlinear kinetic Fokker--Planck equations with porous medium diffusion in a non-perturbative setting. Under a priori conditional bounds on macroscopic quantities, we establish exponential convergence to equilibrium in $L^1$. These bounds are automatically satisfied if the initial data is trapped between two global equilibrium profiles. Our approach combines the entropy–entropy dissipation structure and some techniques from $L^2$-hypocoercivity.
\end{abstract}

\maketitle
\setcounter{tocdepth}{1}

\singlespacing

\section{Introduction}

In this paper we are interested in establishing the convergence to equilibrium of solutions to the following nonlinear kinetic Fokker--Planck equations:
\begin{equation}\label{eq: main}
    (\pa_t  + v\cdot \nabla_x) f = \Delta_v f^m + \nabla_v\cdot(vf) \quad \text{on} \quad \R_+\times \T^d\times\R^d, \quad m\ge 1, 
\end{equation}
with particular emphasis in the nonlinear porous medium regime $m>1$.

For the spatially homogeneous counterpart of \eqref{eq: main},
\begin{equation}\label{eq: par}
    \pa_t f = \Delta_v f^m + \nabla_v\cdot(vf),
\end{equation}
the long-time behaviour of solutions has been extensively studied. The entropy functional for \eqref{eq: par} was provided in \cite{N84,R84} together with compactness arguments leading to convergence to the equilibrium state without rates. A nonlinear version of the Bakry--Emery approach was developed in \cite{CT00,CJMTU01} to establish trend to equilibrium.  The long-time behaviour of \eqref{eq: par} can be understood by using generalized Sobolev/Gagliardo-Nirenberg inequalities \cite{DPD02}, as classically done in the case $m=1$ by log-Sobolev inequalities. These nonlinear versions of the log-Sobolev inequality were further generalized for a large family of pressure laws in \cite{CJMTU01}, see \cite{ACM26} for the latest developments. We also refer to \cite{O01} concerning the important gradient flow structure of \eqref{eq: par} that also leads to the understanding of rates of convergence and hidden convexity structures. 

Turning to the spatially inhomogeneous case of \eqref{eq: main}, some developments have recently appeared. In \cite{BCD26} the fundamental solution was discussed (see also the paragraphs below) for a certain range of $m$. More recently, in \cite{BDM26+,BCDQ26+}, the gradient flow structure, existence of weak solutions, and diffusion limit of \eqref{eq: main} were investigated. In \cite{CK26+},  the hydrodynamic limit from a variant of \eqref{eq: main} toward isentropic Euler flow has been established. 

However, it remains open whether a solution to \eqref{eq: main} converges in the long-time to the global equilibrium state. Therefore, we are interested in establishing a hypocoercivity theory for \eqref{eq: main}. We refer to the monograph \cite{V09} and the excellent lecture notes \cite{H18}, along with the references therein, for an introduction to this topic. Specifically to the linear case of \eqref{eq: main} (namely $m=1$), we may also refer to \cite{MM16,GMS17}, as well as \cite{CM24} where general domains with Maxwell boundary conditions were incorporated in the spirit of \cite{BCMT23}.

Although the theory of hypocoercivity is quite rich, an extension to the model \eqref{eq: main} appears nontrivial and has not been developed. Yet in the aforementioned \cite{BCD26}, an explicit computation for the fundamental solution of the equation \eqref{eq: main} without drift, namely
\begin{equation*}
    \p_t f + v\cdot \nabla_x f = \Delta_v f^m,
\end{equation*}
was provided, given that $m\in (1-1/d,1)\cup (1,1+1/d)$. Roughly speaking, the main result of \cite{BCD26} states that if the initial data is trapped between two stationary Barenblatt profiles, then the long-time behaviour of the solution $f$ is governed by the fundamental solution $f_\star$, in the sense that $\|f(t,\cdot,\cdot)-f_\star(t,\cdot,\cdot)\|_{L^1_{x,v}} \to 0$ as $t\to\infty$. 

Prior to stating our main result, we remind the reader of the global equilibria corresponding to \eqref{eq: main}. For $m=1$, it is the Maxwellian $M_1(v) := e^{-|v|^2/2}/(2\pi)^{d/2}$. When $m>1$, it is the Barenblatt profile
\begin{equation*}
    M_m(v) := \lt( \frac{m-1}{m} \lt( c_{m,d} - \frac{|v|^2}{2} \rt) \rt)_+^{\frac{1}{m-1}},
\end{equation*}
Here we fix $c_{m,d}>0$ so that $M_m$ is a probability measure on $\bbR^d$. More generally, if $\rho>0$ is a given density number, then
\begin{equation}\label{lmm}
    M_{m,\rho}(v) := \lt( \frac{m-1}{m} \lt( c_{m,d} \rho^{\gamma-1} - \frac{|v|^2}{2} \rt) \rt)_+^{\frac{1}{m-1}} \quad \text{with} \quad \gamma:= 1 + \lt(\frac{d}{2}+ \frac{1}{m-1}\rt)^{-1} 
\end{equation}
satisfies $\|M_{m,\rho}\|_{L^1(\R^d)} = \rho$. We refer to $M_{m,\rho}$ as the local Barenblatt profile corresponding to $\rho$, and we remark that this one is deeply related to barotropic gas dynamics owing to the form of its pressure tensor \cite{B99,BV05,CK26+}.

Our main result concerns \textit{conditional hypocoercivity} for the nonlinear kinetic Fokker--Planck equations in \eqref{eq: main}. The conditional bounds that we impose on the solution are automatically satisfied for a wide class of initial data; see Remark \ref{rem:pizza} and Corollary \ref{cor: main} below. The conditions we impose are that the solution $f$ to \eqref{eq: main} satisfies
\begin{equation}\label{paella} 
\begin{aligned}
    &0< \lambda \le \rho_f(t,x) \le \Lambda \quad \text{where} \quad \rho_f := \intr f \,\dv,  \\
    &\intr |v|^2f(t,x,v)\,\dv \le E,\quad \intr f^m(t,x,v)\,\dv \le \Lambda_m.
\end{aligned}
\end{equation}
We point out that these bounds are imposed only on the {macroscopic quantities} of the solution. These conditions are reminiscent of the ones used in the program of conditional regularity for collisional kinetic equations, see for instance \cite{S16,HS20,IS20,S20}. The quantity $\intr f^m\,\dv$ plays the role of the Boltzmann entropy $\intr f\log f \,\dv$ when $m>1$. However, we do not need an upper bound assumption on the Boltzmann entropy in the case where $m=1$.

\begin{theorem}\label{thm: main}
    Let $m\in [1,\infty)$ be given and $f$ be a global-in-time weak solution to \eqref{eq: main} with unit mass. We further assume that there exists a finite $T_0\ge 0$ such that \eqref{paella} holds for all $(t,x)\in [T_0,\infty)\times \T^d$. Then for all $t \ge T_0$,
    \[
    \|f(t) - M_m\|_{L^1(\T^d\times \R^d)} \le Ce^{-\nu (t-T_0)},
    \]
    where $C=C(m,d)>0$, $\nu=\nu(\lambda,\Lambda,E,\Lambda_m,m,d)>0$.
\end{theorem}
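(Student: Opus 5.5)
The plan is to work with the relative entropy of $f$ with respect to the global equilibrium and close a modified-entropy (hypocoercive) inequality. First I build a Lyapunov functional that controls $L^1$ distance, then correct it with an $L^2$-type macroscopic term.

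\textbf{Step 1: entropy and dissipation.} For $m>1$ define
\[
\mathcal H(f)=\iinttr\Big(\frac{f^m}{m-1}+\frac{|v|^2}{2}f\Big)\dx\dv-\iinttr\Big(\frac{M_m^m}{m-1}+\frac{|v|^2}{2}M_m\Big)\dx\dv,
\]
with the usual $f\log f$ version when $m=1$. Along \eqref{eq: main},
\[
\ddt\mathcal H=-\calD(f),\qquad \calD(f)=\iinttr f\Big|\nabla_v\Big(\frac{m}{m-1}f^{m-1}+\frac{|v|^2}{2}\Big)\Big|^2\dx\dv .
\]
Split $\mathcal H=\mathcal H_{\rm loc}+\mathcal H_{\rm mac}$. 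Here $\mathcal H_{\rm loc}$ is the entropy of $f(x,\cdot)$ relative to a local equilibrium $M_{m,\rho_f(x)}(\cdot-u_f(x))$, and $\mathcal H_{\rm mac}$ is the relative entropy of the macroscopic fields $(\rho_f,u_f)$ with respect to $(1,0)$. The latter is comparable to $\|\rho_f-1\|_{L^2}^2+\|\rho_f u_f\|_{L^2}^2$, using $\lambda\le\rho_f\le\Lambda$.

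The nonlinear generalized log-Sobolev inequality of \cite{CJMTU01,DPD02}, applied fibrewise in $x$, gives $\calD\ge c\,\mathcal H_{\rm loc}$. The constant is uniform because the density and energy bounds in \eqref{paella} make the relevant Barenblatt families uniformly nondegenerate. The $L^1$ conclusion then follows at the end from a Csiszár--Kullback type inequality, $\|f-M_m\|_{L^1}^2\le C\mathcal H$, valid for these convex entropies.

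\textbf{Step 2: macroscopic correction.} Following $L^2$-hypocoercivity in the spirit of \cite{H18,MM16}, I add
\[
\e\,\Phi(t)=\e\intt \nabla_x\phi\cdot j_f\,\dx,\qquad -\Delta_x\phi=\rho_f-1,\qquad j_f=\intr vf\dv,
\]
possibly together with a symmetric-gradient corrector for the momentum. The local conservation laws are
\[
\pa_t\rho_f+\nabla_x\cdot j_f=0,\qquad \pa_t j_f+\nabla_x\cdot\intr v\otimes v f\dv=-j_f .
\]
The pressure tensor decomposes as the local-equilibrium pressure $\rho u\otimes u+c\rho^\gamma I$ plus a remainder, where $\gamma$ is as in \eqref{lmm}. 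With this, $\frac{d}{dt}\Phi$ produces the coercive term $-\intt(\rho_f^\gamma-1)(\rho_f-1)\dx\gtrsim-\|\rho_f-1\|_{L^2}^2$; the sign here uses $\rho_f\ge\lambda$. The remaining error terms are bounded by $\|j_f\|_{L^2}^2$ and $\|\intr v\otimes v(f-M_{\rm loc})\dv\|_{L^2}^2$.

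\textbf{Step 3: the main obstacle.} The hard step is bounding the moment errors by the nonlinear dissipation $\calD$ without regularity. Cauchy--Schwarz against $f$, combined with the identity $\intr v\otimes v\,(f-M_{\rm loc})\dv=-\intr v\otimes f\,\nabla_v(\cdots)\dv$ (integration by parts, since $M_{\rm loc}$ makes the bracket vanish in the $\nabla_v$ sense), gives
\[
\Big|\intr v\otimes v(f-M_{\rm loc})\dv\Big|^2\le \Big(\intr |v|^2 f\dv\Big)\intr f|\nabla_v(\cdots)|^2\dv\le E\,\calD_x .
\]
I expect this to work up to momentum-shift terms. Similarly $\|j_f\|_{L^2}^2\le C\calD$, since $j_f=-\intr f\nabla_v(\cdots)\dv$ after integrating by parts. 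Both bounds use only the macroscopic bounds in \eqref{paella}, with $\Lambda_m$ entering to compare $\mathcal H_{\rm loc}$ and the $f^m$ terms. Also $|\Phi|\le C\mathcal H$ by elliptic estimates.

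\textbf{Conclusion.} Choosing $\e$ small, $L=\mathcal H+\e\Phi$ satisfies $L\simeq\mathcal H$ and $\dot L\le-\nu L$. Gronwall from $T_0$, followed by Csiszár--Kullback, gives the claim. The constant $C$ depends only on $(m,d)$ if the initial entropy $\mathcal H(f(T_0))$ is bounded via \eqref{paella}; otherwise a slight adjustment absorbs it.
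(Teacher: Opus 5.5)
Your overall architecture is the paper's: a fibrewise log-Sobolev inequality for the local part, the same corrector $\intt j_f\cdot\nabla_x\phi\,\dx$, coercivity from the $\rho_f^\gamma$ pressure, $\|j_f\|_{L^2}^2$ controlled by the dissipation (your bound $|j_f|^2\le\rho_f D_m[f]$ is correct), then Gronwall and Csisz\'ar--Kullback. The gap is in Step 3, which is exactly where $m>1$ differs from $m=1$.

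The identity you invoke is false for $m>1$. Since $\intr v\otimes\nabla_v f^m\,\dv=-\big(\intr f^m\,\dv\big)\bbI_d$ and $\intr v\otimes v\,M_{m,\rho_f}\,\dv=\big(\intr M_{m,\rho_f}^m\,\dv\big)\bbI_d$, integration by parts gives
\[
\intr v\otimes v\,(f-M_{m,\rho_f})\,\dv=\intr v\otimes(\nabla_v f^m+vf)\,\dv+\Big(\intr f^m\,\dv-\intr M_{m,\rho_f}^m\,\dv\Big)\bbI_d .
\]
The last term is the mismatch between the porous-medium pressure of $f$ and that of its local Barenblatt profile. It vanishes identically only for $m=1$, where both integrals equal $\rho_f$. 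It is not of the form $\intr v\otimes f\nabla_v(\cdots)\,\dv$, so Cauchy--Schwarz against $f$ gives no information on it. Your hedge ``up to momentum-shift terms'' does not cover it: shifting the local equilibrium by $u_f$ is harmless because $j_f\otimes u_f=\rho_f u_f\otimes u_f$, but this scalar term survives. Without a bound $\big|\intr (f^m-M_{m,\rho_f}^m)\,\dv\big|\lesssim D_m[f]^{1/2}$, the error $\intt S:\nabla_x^2\phi\,\dx$ cannot be absorbed and the Lyapunov inequality does not close.

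This is the genuinely nonlinear step of the paper (the term $S^2_{m,f}$ in Lemma~\ref{lem:J2}), handled via \eqref{carbonara}. The paper splits $f^m-M^m$ into $(m-1)$ times the Bregman integrand of $z^m/(m-1)$ plus $mM^{m-1}(f-M)$.
\begin{enumerate}[(i)]
\item The Bregman part is controlled by $E_m[f|M_{m,\rho_f}]\le H_m[f|M_{m,\rho_f}]$.
\item The second part is controlled by $mM_{m,\rho_f}^{m-1}\le(m-1)c_{m,d}\rho_f^{\gamma-1}$ together with the mass-rescaled Csisz\'ar--Kullback inequality (Lemma~\ref{tianyuan}).
\item Finally, $H_m[f|M_{m,\rho_f}]\le H_m[f]\le E/2+\Lambda_m/(m-1)$ converts the linear piece into a square root. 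This is where $E$ and $\Lambda_m$ really enter.
\end{enumerate}

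A shorter way to fill your gap is a virial identity. Set $T:=\intr v\cdot(\nabla_v f^m+vf)\,\dv$. Then $H_m[f]=\tfrac12 T+\tfrac{1}{\gamma-1}\intr f^m\,\dv$, and $T=0$ for $M_{m,\rho_f}$, so
\[
\intr f^m\,\dv-\intr M_{m,\rho_f}^m\,\dv=(\gamma-1)\Big(H_m[f|M_{m,\rho_f}]-\tfrac12 T\Big).
\]
Moreover $|T|\le E^{1/2}D_m[f]^{1/2}$ and $H_m[f|M_{m,\rho_f}]\le\min\{\tfrac12 D_m[f],\,E/2+\Lambda_m/(m-1)\}$. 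Together these give the required bound $C_{m,d,E,\Lambda_m}D_m[f]^{1/2}$. Either way, such an argument is missing from your proposal, and without it your Step 3, and hence the whole estimate for $m>1$, is unjustified.
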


\begin{remark} \label{rem:pizza}
    As aforementioned, there is a wide class of initial data such that \eqref{paella} hold automatically for the corresponding solutions to \eqref{eq: main}. Namely, if it is the case that the initial data satisfies the pointwise bounds
    \begin{equation}\label{eq:truffle}
        \tilde\Lambda^{-1} M_m \le f(0) \le \tilde\Lambda M_m \quad \forall (x,v)\in \T^d\times\R^d
    \end{equation}
    for some $\tilde \Lambda>0$,
    then these bounds propagate with time, by virtue of the maximum principle. This implies that the solutions whose initial data satisfy \eqref{eq:truffle} verify the conditional bounds \eqref{paella} with $T_0=0$, so that the conclusion of Theorem \ref{thm: main} automatically holds:

\begin{corollary}\label{cor: main}
    We let $m\in [1,\infty)$ be given, and we assume that the initial data $f(0)$ satisfies \eqref{eq:truffle} for some $\tilde\Lambda > 0$. Suppose also that $f(0)$ has unit mass. Then for the corresponding global-in-time weak solution to \eqref{eq: main}, there exist $C,\nu>0$ (depending on $\tilde\Lambda, m, d$) such that for every $t\ge 0$, there holds
    \begin{equation*}
        \|f(t) - M_m\|_{L^1(\T^d\times\R^d)} \le C e^{-\nu t}.
    \end{equation*}
\end{corollary}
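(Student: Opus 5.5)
Corollary \ref{cor: main} is obtained by checking that the pointwise sandwich \eqref{eq:truffle} propagates in time and then yields each of the macroscopic bounds in \eqref{paella} with $T_0=0$. Theorem \ref{thm: main} then gives the result, and all constants depend only on $\tilde\Lambda,m,d$.

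\emph{Propagation.} For $m>1$, the first step is to note that for any $c>0$ the scaled profile $cM_m$ is not itself stationary. What is stationary for \eqref{eq: main} is the local Barenblatt $M_{m,\rho}$ of \eqref{lmm}, since $\Delta_v M_{m,\rho}^m+\nabla_v\cdot(vM_{m,\rho})=0$. So the natural barriers are $M_{m,\rho_-}$ and $M_{m,\rho_+}$, and I would read \eqref{eq:truffle} in that sense. Concretely, I would choose $\rho_\pm$ with $M_{m,\rho_-}\le f(0)\le M_{m,\rho_+}$, which is possible since the comparison $\tilde\Lambda^{-1}M_m\le f(0)\le \tilde\Lambda M_m$ controls the support and the height of $f(0)$.

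For $m=1$ we have $cM_1$ stationary, so this issue does not arise. The comparison principle for \eqref{eq: main} is the step I would expect to be the main technical point. I would prove it by an $L^1$-contraction (Kato-type) argument: take $\phi=\mathrm{sgn}^+(f-g)$ against the difference of two weak solutions $f,g$. The transport and drift terms are conservative, and the monotonicity of $s\mapsto s^m$ gives $\frac{d}{dt}\iint (f-g)_+\le 0$. Since $\T^d$ has no boundary and $x$-independent barriers are solutions, this gives $M_{m,\rho_-}\le f(t)\le M_{m,\rho_+}$ for all $t\ge0$.

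\emph{Checking \eqref{paella}.} Integrating the sandwich in $v$ gives $\rho_-\le\rho_f(t,x)\le\rho_+$, hence $\lambda=\rho_-$ and $\Lambda=\rho_+$. Since $M_{m,\rho_+}$ is compactly supported (Gaussian when $m=1$), the energy bound $\intr|v|^2f\,\dv\le \intr |v|^2M_{m,\rho_+}\,\dv=:E$ follows. Likewise $\intr f^m\,\dv\le\intr M_{m,\rho_+}^m\,\dv=:\Lambda_m$. Unit mass is preserved by the equation, so Theorem \ref{thm: main} applies with $T_0=0$ and yields $\|f(t)-M_m\|_{L^1}\le Ce^{-\nu t}$ with $\nu$ depending only on $\tilde\Lambda,m,d$.
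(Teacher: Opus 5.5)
Your argument is correct. In the propagation step it departs from the paper's in a way that matters.

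The paper (Remark~\ref{rem:pizza}) asserts that \eqref{eq:truffle} itself propagates by the maximum principle, and then reads off \eqref{paella} with $T_0=0$. You instead propagate $M_{m,\rho_-}\le f(t)\le M_{m,\rho_+}$ between two stationary Barenblatt profiles.

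For $m>1$ yours is the route that actually works. We have $\Delta_v(cM_m)^m+\nabla_v\cdot(vcM_m)=(c-c^m)\nabla_v\cdot(vM_m)$, and $\nabla_v\cdot(vM_m)=M_m^{2-m}\bigl(dM_m^{m-1}-|v|^2/m\bigr)$ changes sign. So $\tilde\Lambda M_m$ is not a supersolution and $\tilde\Lambda^{-1}M_m$ is not a subsolution. In fact \eqref{eq:truffle} fails to propagate in general. Take $x$-independent unit-mass data equal to $\tilde\Lambda M_m$ (with $\tilde\Lambda>1$) near the edge of $\mathrm{supp}\,M_m$. There one formally has $\pa_t f(0)=(\tilde\Lambda-\tilde\Lambda^m)\nabla_v\cdot(vM_m)>0$, so $f$ immediately exceeds $\tilde\Lambda M_m$.

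The paper's formulation, were it true, would give \eqref{paella} directly in terms of the single constant $\tilde\Lambda$. Yours costs one extra step: $\tilde\Lambda^{-1}M_m\le f(0)\le\tilde\Lambda M_m$ implies $M_{m,\rho_-}\le f(0)\le M_{m,\rho_+}$. You justify this only loosely via ``support and height'', and it is easy to make explicit:
\begin{itemize}
\item We have $M_{m,\rho}^{m-1}=\frac{m-1}{m}\bigl(c_{m,d}\rho^{\gamma-1}-|v|^2/2\bigr)_+$, and $\tilde\Lambda\ge 1$ is forced by the two-sided bound.
\item Set $\rho_\pm:=\tilde\Lambda^{\pm(1+d(m-1)/2)}$, i.e.\ $\rho_\pm^{\gamma-1}=\tilde\Lambda^{\pm(m-1)}$.
\item Comparing these expressions, which are affine in $|v|^2$ on their supports, gives $\tilde\Lambda M_m\le M_{m,\rho_+}$ and $M_{m,\rho_-}\le\tilde\Lambda^{-1}M_m$.
\item For $m=1$ this reduces to $\rho_\pm=\tilde\Lambda^{\pm1}$ with $M_{1,\rho}=\rho M_1$.
\end{itemize}

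Your Kato-type $L^1$-comparison is at the same formal level as the paper's appeal to the maximum principle.

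Finally, the proof of Theorem~\ref{thm: main} really yields the prefactor $C_{m,d}\,\calH_m[f(0)|M_m]^{1/2}$, not a constant depending on $(m,d)$ alone. Your sandwich bounds this initial relative entropy in terms of $\tilde\Lambda,m,d$; for example, $\calH_1[f(0)|M_1]\le\log\tilde\Lambda$ when $m=1$. That bound is what makes $C$ in the corollary depend only on $\tilde\Lambda,m,d$, so you should state it explicitly.
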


Initial data satisfying pointwise bounds such as \eqref{eq:truffle} were considered in \cite{DV01,NS05,FPS23,AZ24,EMY26,APT26,PT26} to study hypocoercivity in various kinetic models. 
    \end{remark}
    
    \begin{remark}
     In \cite[Theorem 1.1]{DV01}, the trend to equilibrium of solutions to \eqref{eq: main} for $m=1$, in the whole space with a spatially confining potential, was studied for initial data trapped between two global equilibria. 
     
     The results of \cite{DV01} have been improved, in several directions, throughout the literature. For instance, in \cite[Theorem 38]{V09}, the class of initial data is relaxed to those having arbitrarily large moments. This assumption allows the
employment of hypoelliptic estimates, which are in turn necessary because Villani's proof uses a modified entropy functional which incorporates the Fisher information in both the space and velocity variables. In the case of $m>1$, this approach appears out of reach because it is not known whether such a gain of regularity is possible. Here, we bypass this difficulty because our methods do not need incorporation of any $H^1$-type entropy.
\end{remark}

\begin{remark}
    We would like to acknowledge that we drew inspiration from the recent preprint \cite{APT26}, as well as \cite{NS05,FPS23,PT26}. In \cite{APT26}, the hypocoercivity of a quantum Fokker--Planck equation was studied for initial data satisfying bounds of the form \eqref{eq:truffle}, these of which propagate with time. Using the lower and upper pointwise bounds on $f$, the equivalence between the relative entropy and a weighted $L^2$ norm was established. Then, $L^2$-hypocoercivity methods \cite{DMS15} were used to close the estimates.

    A crucial different approach we take is that instead of using pointwise estimates on $f$ such as \eqref{eq:truffle} to convert to an $L^2$-framework, we stay at the framework of entropy-entropy dissipation. This allows us to obtain coercive estimates based on only macroscopic bounds such as \eqref{paella}.
    Finally, we emphasize that our trend to equilibrium result holds in the  $L^1$ space without any weight.
    \end{remark}

\begin{remark}
    Using our arguments with entropy, we can establish the long-time behaviour of certain other nonlinear kinetic Fokker--Planck equations as well, for instance
    \begin{equation*}
        \p_t f + v\cdot \nabla_x f = \rho_f^\beta (\Delta_v f + \nabla_v\cdot (vf)), \quad \beta>0.
    \end{equation*}
    Our approach therefore provides an alternative perspective to \cite{AZ24}, who worked in the $L^2$-framework.
\end{remark}

We briefly explain the idea behind Theorem \ref{thm: main}. The distance between $f$ and the global equilibrium $M_m$ is measured using a relative entropy with respect to $M_m$. By the entropy-entropy dissipation structure of \eqref{eq: main} and logarithmic Sobolev-type inequalities \cite{DPD02,CJMTU01}, we first obtain dissipation of the relative entropy, but only with respect to the local equilibrium $M_{m,\rho_f}$:
\[
\ddt \calH_m[f|M_m]\lesssim -\calH_m[f|M_{m,\rho_f}].
\]
This inequality does not provide full dissipation toward the global equilibrium. It is mainly due to the fact that the dissipation fails for any local equilibria. To capture the relaxation of the local equilibrium $M_{m,\rho_f}$ towards the global one $M_{m}$, we construct a Lyapunov functional by incorporating a corrector term which is well-known in the theory of hypocoercivity \cite{DMS09,G10,EGKM13,DMS15,BG16,APT26}. This corrector term is typically adopted towards the end of closing estimates in the weighted $L^2$ framework. However, under the conditional bounds of \eqref{paella}, we prove that it is also efficient in closing estimates in the relative entropy framework (see for instance Lemma \ref{lem: comp} and Proposition \ref{prop: corrector}). We are then able to conclude to Theorem \ref{thm: main} thanks to Csisz\'ar--Kullback-type inequalities.

We finally point out that our main focus relies on the a priori estimates that yield, at least formally, the long-time behaviour of solutions. We do not investigate the development of a well-posedness theory of weak solutions satisfying the needed control on macroscopic quantities leading to our long-time behavior results being not conditional, as well as, the approximation of these weak solutions by smooth enough solutions for which the computations in this paper are fully rigorous.

The rest of the article is organized as follows. In Section \ref{sec: lin}, we use the linear case $m=1$ to showcase our approach. The key result is encapsulated in  Proposition \ref{prop: linFP}. In Section \ref{sec: nonlin}, we establish Proposition \ref{prop: corrector}, which is the nonlinear analogue of Proposition \ref{prop: linFP} for $m>1$. Finally, Section \ref{sec: thm} concludes the proof of Theorem \ref{thm: main}.



\section{The linear case $m=1$} \label{sec: lin}
To illustrate our approach, we first study the linear case of $m=1$
\begin{equation}\label{eq: linFP}
 (\pa_t + v\cdot \nabla_x)f = \Delta_v f + \nabla_v \cdot (vf),
\end{equation}
that will guide us for $m>1$ case in Section \ref{sec: nonlin}.

\subsection{Set-up}
Let us recall that the entropy associated to \eqref{eq: linFP} is written
\[
H_1[f]= \intr \lt(\frac{1}{2}|v|^2 +  \log f \rt) f\,\dv.
\]
The dissipation of the entropy
reads
\[
D_1[f]=\intr \frac{|\nabla_v f + vf|^2}{f}\,\dv.
\]
For $(\rho,U,T)\in \R_+\times \R^d \times \R_+^*$, we denote the local Maxwellian as
\[
M^{(\rho,U,T)}:= \frac{\rho}{(2\pi T)^{\frac{d}{2}}}e^{-\frac{|v-U|^2}{2T}},
\]
whereas the global Maxwellian is defined as
\[
M_1(v):= M^{(1,0,1)} = \frac{1}{(2\pi)^{\frac{d}{2}}}e^{-\frac{|v|^2}{2}}.
\]
The relative entropy for $m=1$ case is defined as the Kullback-Leibler divergence
\[
H_1[f|g]= \intr f \log \frac{f}{g}\,\dv.
\]
In particular, we get
\[
\begin{aligned}
H_1[f|M_1] = \intr f \log \frac{f}{M_1}\,\dv &= \intr \frac{1}{2}|v|^2 f + f\log f +\frac{d}{2}\log(2\pi) f\,\dv \\
&= H_1[f] + \frac{d}{2}\log(2\pi)\rho_f.
\end{aligned}
\]
Another important distribution is the stationary local Maxwellian
\begin{equation*}
    M_{1,\rho_f} := M^{(\rho_f,0,1)}=\frac{\rho_f}{(2\pi)^{\frac{d}{2}}}e^{-\frac{|v|^2}{2}}.
\end{equation*}
Thanks to the logarithmic Sobolev inequality (LSI), we have
\begin{equation}\label{eq:LSI1}
    D_1[f] \ge 2 H_1[f|M_{1,\rho_f}].
\end{equation}
Since $\intr f \,\dv = \intr M_{1,\rho_f}\,\dv =\rho_f$, Jensen's inequality shows
\[
H_1[f|M_{1,\rho_f}]= \intr \lt( f\log \frac{f}{M_{1,\rho_f}}- f + M_{1,\rho_f} \rt)\,\dv \ge 0.
\]
On the other hand, notice that
\[
H_1[f|M_1] - H_1[f|M_{1,\rho_f}] = \intr f \log \rho_f\,\dv = \rho_f \log \rho_f.
\]
Since $\intt \rho_f\,\dx =1$, Jensen's inequality again gives 
\[
\calP_1[\rho_f]:=\intt \rho_f \log \rho_f\,\dx \ge 0.
\]
Denoting
\[
\calH_1[\cdot] = \intt H_1[\cdot]\,\dx,\quad  \calD_1[\cdot] = \intt D_1[\cdot]\,\dx,
\]
we therefore also have
\begin{equation}\label{aux}
\calH_1[f|M_1] = \intt H_1[f|M_{1,\rho_f}]\,\dx + \calP_1[\rho_f]\ge 0.
\end{equation}
Then, the entropy--entropy dissipation equality associated to \eqref{eq: linFP} reads
\[
\ddt \calH_1[f(t)] = - \calD_1[f(t)].
\]
Thanks to mass conservation and the LSI \eqref{eq:LSI1}, we have
\[
\ddt \calH_1[f|M_1] = \ddt \calH_1[f] \le -\frac{1}{2}\calD_1[f(t)] -\calH_1[f|M_{1,\rho_f}].
\]
Obtaining the missing dissipation term $\calP_1[\rho_f]$ in \eqref{aux} is the key for the remaining subsections.

\subsection{The corrector term}
To obtain the decay of $\calP_1[\rho_f]$, we introduce the corrector term
\begin{equation}\label{coriolis}
\calC[f]:= \intt j_f \cdot \nabla_x\Phi_f\,\dx, \quad j_f:= \intr v f\,\dv,
\end{equation}
where $\Phi_f$ is the solution to the Poisson equation
\[
(-\Delta_x)\Phi_f = \rho_f -1,\quad \intt \Phi_f\,\dx =0.
\]
We shall soon see, in Lemma \ref{lem: m=1 comp}, that this crossing term may be bounded in terms of $\calH_1[f|M_1]$. This is possible due to the following equivalence:

\begin{lemma}\label{lem:baby1}
   If $\rho_f \ge 0$ satisfies $\intt \rho_f\,\dx =1$ and $\lambda \le \rho_f \le \Lambda$ for some $0< \lambda \le \Lambda <+\infty$, then it holds that
   \[
   \frac{1}{\Lambda} \|\rho_f-1\|_{L^2}^2\le\intt \rho_f \log \rho_f\,\dx \le \frac{1}{\lambda} \|\rho_f-1\|_{L^2}^2
   \]
   
\end{lemma}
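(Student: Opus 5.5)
Since $\intt \rho_f\,\dx = 1$, we can add the zero term $-\intt(\rho_f-1)\,\dx$ and write
\[
\intt \rho_f\log\rho_f\,\dx = \intt \psi(\rho_f)\,\dx, \qquad \psi(s) := s\log s - s + 1 \ge 0 .
\]
This reduces the lemma to a pointwise comparison between $\psi(s)$ and $(s-1)^2$ on $[\lambda,\Lambda]$. Note that $\lambda\le 1\le\Lambda$ necessarily holds, because $\rho_f$ has unit mean on the unit-volume torus.

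For the comparison, use Taylor's formula with integral remainder around $s=1$. We have $\psi(1)=\psi'(1)=0$ and $\psi''(\tau)=1/\tau$, so
\[
\psi(s) = (s-1)^2\int_0^1 \frac{(1-\theta)}{1+\theta(s-1)}\,d\theta .
\]
For $s\in[\lambda,\Lambda]$, the point $1+\theta(s-1)$ lies between $1$ and $s$, hence in $[\lambda,\Lambda]$. Therefore $\psi''$ there lies in $[1/\Lambda,1/\lambda]$. Since $\int_0^1(1-\theta)\,d\theta = \tfrac12$, this gives
\[
\frac{(s-1)^2}{2\Lambda} \le \psi(s) \le \frac{(s-1)^2}{2\lambda}.
\]
Setting $s=\rho_f(x)$ and integrating over $\T^d$ yields the claim, with constants $\tfrac{1}{2\Lambda}$ and $\tfrac{1}{2\lambda}$.

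These constants are a factor $2$ sharper than the ones stated. The stated upper bound $\frac1\lambda\|\rho_f-1\|_{L^2}^2$ follows immediately, since $\tfrac{1}{2\lambda}\le\tfrac{1}{\lambda}$. The stated lower bound does \emph{not} follow, because $\tfrac1\Lambda>\tfrac1{2\Lambda}$. In fact it fails as written: for $\rho_f=1+\varepsilon\phi$ with $\intt\phi\,\dx=0$ and $\Lambda=1+\varepsilon\|\phi\|_\infty$, the left side is about $\tfrac{\varepsilon^2}{2}\|\phi\|_{L^2}^2$, while $\tfrac1\Lambda\|\rho_f-1\|_{L^2}^2$ is about $\varepsilon^2\|\phi\|_{L^2}^2$, so the inequality is violated for small $\varepsilon$. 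I would therefore prove the version with $\frac{1}{2\Lambda}$ on the left and remark that the upper bound holds as stated. The two-sided equivalence up to constants depending only on $\lambda,\Lambda$ is all that later arguments (e.g.\ Lemma \ref{lem: m=1 comp}) can need.

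There is no real obstacle beyond this. The only subtle steps are the mean-zero rewriting that makes the integrand nonnegative and the observation that the Taylor intermediate point stays in $[\lambda,\Lambda]$.
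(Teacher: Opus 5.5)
Your argument is correct and is essentially the paper's proof: expand $x\log x$ to second order about $1$, remove the linear term using $\intt \rho_f\,\dx=1$, and bound the second derivative $1/\tilde x$ between $1/\Lambda$ and $1/\lambda$. You are also right about the constant: the paper writes the remainder as $\frac{1}{\tilde x}(x-1)^2$ and drops the factor $\frac12$, so the stated lower bound with $\frac{1}{\Lambda}$ is false (your perturbative counterexample is valid), whereas the version with $\frac{1}{2\Lambda}$ only changes constants downstream, e.g.\ $N^{(0)}$ in Lemma \ref{lem: m=1 comp}.
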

\begin{proof}
    Let $\psi(x)= x \log x$ for $x>0$. For $x \in [\lambda,\Lambda]$, we have
    \[
    \psi(x) = (x-1) + \frac{1}{\tilde{x}}(x-1)^2
    \]
    for some $\tilde{x} \in [\lambda, \Lambda]$. This proves the desired inequality.
\end{proof}

\begin{lemma}\label{lem: m=1 comp}
    If $\rho_f \in L^1_+(\T^d)$ satisfies $\intt \rho_f \,\dx =1$ and $\rho_f \le \Lambda$, we have
    \[
   |\calC[f]| \le N^{(0)} \calH_1[f|M_1],
   \] 
   for some constant $N^{(0)}=C_{\Lambda, d}>0$.
\end{lemma}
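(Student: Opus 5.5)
We bound $\calC[f]=\intt j_f\cdot\nabla_x\Phi_f\,\dx$ by splitting each factor and controlling both by the relative entropy. By Cauchy--Schwarz in $x$ and Young's inequality,
\[
|\calC[f]| \le \|j_f\|_{L^2_x}\|\nabla_x\Phi_f\|_{L^2_x} \le \tfrac12\|j_f\|_{L^2_x}^2 + \tfrac12\|\nabla_x\Phi_f\|_{L^2_x}^2 .
\]
The rest of the argument bounds each of the two terms on the right by a constant multiple of $\calH_1[f|M_1]$.

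\emph{Potential term.} Testing the Poisson equation against $\Phi_f$ gives $\|\nabla_x\Phi_f\|_{L^2}^2=\intt(\rho_f-1)\Phi_f\,\dx$. This is at most $\|\rho_f-1\|_{L^2}\|\Phi_f\|_{L^2}$, and the Poincar\'e inequality on $\T^d$ (recall $\Phi_f$ has zero mean) gives $\|\Phi_f\|_{L^2}\le C_d\|\nabla_x\Phi_f\|_{L^2}$. Hence $\|\nabla_x\Phi_f\|_{L^2}^2\le C_d^2\|\rho_f-1\|_{L^2}^2$. The lower bound in Lemma \ref{lem:baby1} needs only $\rho_f\le\Lambda$, because it comes from $\psi(x)\ge (x-1)+\frac{1}{\max(x,1)}(x-1)^2$. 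Combining this with $\intt\rho_f=1$ yields $\|\rho_f-1\|_{L^2}^2\le \Lambda\,\calP_1[\rho_f]$. By \eqref{aux}, $\calP_1[\rho_f]\le\calH_1[f|M_1]$.

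\emph{Flux term.} This is the main step: no moment bound is assumed, so $j_f$ has to be controlled by entropy alone. The local Maxwellian $M_{1,\rho_f}$ has zero flux, so $j_f=\intr v(f-M_{1,\rho_f})\,\dv$. I would apply a weighted Csisz\'ar--Kullback (Pinsker-type) inequality, obtained through the Donsker--Varadhan variational formula. For any $\alpha>0$ and unit vector $e$,
\[
\alpha\, e\cdot j_f \le H_1[f|M_{1,\rho_f}] + \rho_f\log\Big(\frac{1}{\rho_f}\intr e^{\alpha e\cdot v}M_{1,\rho_f}\,\dv\Big) = H_1[f|M_{1,\rho_f}]+\rho_f\frac{\alpha^2}{2}.
\]
Optimizing over $\alpha$ gives $|j_f|^2\le 2\rho_f H_1[f|M_{1,\rho_f}]\le 2\Lambda H_1[f|M_{1,\rho_f}]$; this is just the Gaussian transport--entropy bound. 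Integrating in $x$ and using \eqref{aux} again,
\[
\|j_f\|_{L^2_x}^2\le 2\Lambda\intt H_1[f|M_{1,\rho_f}]\,\dx\le 2\Lambda\,\calH_1[f|M_1].
\]

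Adding the two bounds gives $|\calC[f]|\le \big(\Lambda+\tfrac12 C_d^2\Lambda\big)\calH_1[f|M_1]$, so we can take $N^{(0)}=C_{\Lambda,d}$. The only delicate point is the flux estimate. The Donsker--Varadhan route needs $M_{1,\rho_f}$ to have Gaussian exponential moments, which the Maxwellian does. I expect this to be the step that fails to transfer directly to the Barenblatt case.
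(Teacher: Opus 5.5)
Your proof is correct and has the same skeleton as the paper's: Cauchy--Schwarz (your Young splitting is equivalent), the energy estimate $\|\nabla_x\Phi_f\|_{L^2}\le C_d\|\rho_f-1\|_{L^2}$, the chain $\|\rho_f-1\|_{L^2}^2\lesssim_\Lambda\calP_1[\rho_f]\le\calH_1[f|M_1]$, and the flux bound $|j_f|^2\le 2\rho_f H_1[f|M_{1,\rho_f}]$ integrated using \eqref{aux}.

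The one step you do differently is how you derive the flux bound. The paper decomposes the relative entropy along the shifted Maxwellian $M^{(\rho_f,u_f,1)}$, which has the same mass and momentum as $f$. Jensen gives $\intr f\log(f/M^{(\rho_f,u_f,1)})\,\dv\ge 0$, and the remaining term is computed exactly as $\rho_f\log\rho_f+|j_f|^2/(2\rho_f)$. You instead use Donsker--Varadhan duality with linear test functions and the Gaussian moment generating function. Both give the identical inequality. The paper's route is more elementary (an exact identity plus Jensen). More to the point, it is the mechanism that survives for $m>1$: shifting velocities by $u_f$ removes exactly $|j_f|^2/(2\rho_f)$ of kinetic energy, and minimality of $M_{m,\rho_f}$ controls the rest, which is \eqref{eq: min pri}. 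So the obstruction you anticipate for the Barenblatt case belongs to the duality argument, not to the estimate itself.

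You also rightly point out something the paper passes over. The lemma assumes only $\rho_f\le\Lambda$, whereas Lemma~\ref{lem:baby1} is stated under two-sided bounds, and only the upper bound is needed for its lower inequality.

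There is one slip in that remark. Since $\psi''(x)=1/x$, the Taylor remainder is $\frac{1}{2\tilde x}(x-1)^2$. Your pointwise inequality $\psi(x)\ge(x-1)+\frac{1}{\max(x,1)}(x-1)^2$ therefore fails near $x=1$: at $x=1.1$ the left side is about $0.1048$ and the right side about $0.1091$. The correct version has $\frac{1}{2\max(x,1)}$ and gives $\|\rho_f-1\|_{L^2}^2\le 2\Lambda\,\calP_1[\rho_f]$, using $\Lambda\ge1$ (forced by $\intt\rho_f\,\dx=1$). The paper's Lemma~\ref{lem:baby1} drops the same factor $\frac12$. In both cases only the value of $N^{(0)}$ changes.
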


\begin{proof}
    First, let us recall the following classical relative entropy inequality
    \begin{equation}\label{eq: crei}
        H_1[f|M_1] \ge \frac{|j_f|^2}{2\rho_f} + \rho_f \log \rho_f.
    \end{equation}
    To see this, we decompose along a local Maxwellian which shares the same mass and momentum with $f$. Namely:
    \begin{align*}
        H_1[f|M_1] &= \intr f \log \frac{f}{M^{(\rho_f,u_f,1)}} + \intr f \log \frac{M^{(\rho_f,u_f,1)}}{M_1}, \quad u_f := \frac{j_f}{\rho_f},
    \end{align*}
    where we recall that
    \begin{equation*}
        M^{(\rho_f,u_f,1)} = \frac{\rho_f}{(2\pi)^{d/2}} \exp\lt(-\frac{|v-u_f|^2}{2} \rt).
    \end{equation*}
    On the one hand, thanks to Jensen's inequality, we have
    \begin{equation*}
        \intr f \log \frac{f}{M^{(\rho_f,u_f,1)}} = \intr \frac{f}{M^{(\rho_f,u_f,1)}} \log \frac{f}{M^{(\rho_f,u_f,1)}} \, (M^{(\rho_f,u_f,1)} \dv) \ge 0.
    \end{equation*}
    On the other hand,
    \begin{align*}
        \intr f \log \frac{M^{(\rho_f,u_f,1)}}{M_1} &= \intr f \lt( \log \rho_f + v \cdot u_f - \frac{|u_f|^2}{2} \rt) \\
        &= \rho_f \log \rho_f + \frac{1}{2} \rho_f |u_f|^2 \\
        &= \rho_f \log \rho_f + \frac{|j_f|^2}{2\rho_f},
    \end{align*}
    which proves \eqref{eq: crei}. Note that $\intt \rho_f = 1$ implies $\intt \rho_f \log \rho_f \ge 0$ by Jensen's inequality. 
    Therefore, from \eqref{eq: crei} we deduce
    \begin{equation}\label{eq:peach}
       \intt |j_f|^2\,\dx \le 2\Lambda \calH_1[f|M_1] .
    \end{equation}

    To conclude the bound for $\calC[f]$, we need a bound on $\nabla_x\Phi_f$. By standard energy estimates,
    \begin{equation}\label{eq:blueberry}
    \|\nabla_x \Phi_f\|_{L^2} \le C_d\|\rho_f-1\|_{L^2}.
    \end{equation}
    Applying \eqref{eq:peach}--\eqref{eq:blueberry} and the Cauchy-Schwarz inequality together with Lemma \ref{lem:baby1}, we get
    \[
    |\calC[f]| \le \Lambda (2C_d)^{\frac{1}{2}}\calH_1[f|M_1].
    \]
\end{proof}

We then state the main goal of this section:
\begin{proposition}\label{prop: linFP}
We assume that the solution $f$ to \eqref{eq: linFP} satisfies the bounds in \eqref{paella}. Then for all $t\ge T_0$, it holds that
    \[
    \ddt \calC[f] \le - N^{(1)} \calP_1[\rho_f] + N^{(2)}\calD_1[f]+ N^{(3)}\calH_1[f|M_{1,\rho_f}]
    \]
    for some constants $N^{(1)}=C_\lambda$,  $N^{(2)}=C_{E,d}$, $N^{(3)}=C_{\Lambda, d}$.
\end{proposition}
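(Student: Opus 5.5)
Differentiating $\calC[f]=\intt j_f\cdot\nabla_x\Phi_f\,\dx$ in time and using the local conservation laws from \eqref{eq: linFP} is the whole plan. Integrating \eqref{eq: linFP} against $1$ and $v$ in velocity gives
\[
\pa_t\rho_f+\nabla_x\cdot j_f=0,\qquad \pa_t j_f+\nabla_x\cdot P_f=-j_f,\quad P_f:=\intr v\otimes v f\,\dv .
\]
Hence $\ddt\calC[f]=\intt \pa_t j_f\cdot\nabla_x\Phi_f\,\dx+\intt j_f\cdot\nabla_x\pa_t\Phi_f\,\dx$, with $-\Delta_x\pa_t\Phi_f=-\nabla_x\cdot j_f$. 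I would decompose $P_f=\rho_f\bbI+(P_f-\rho_f\bbI)$. The isotropic part gives the good term
\[
\intt(-\nabla_x\cdot(\rho_f\bbI))\cdot\nabla_x\Phi_f\,\dx=\intt \rho_f\Delta_x\Phi_f\,\dx=-\intt\rho_f(\rho_f-1)\,\dx=-\|\rho_f-1\|_{L^2}^2 .
\]
By Lemma \ref{lem:baby1} this is at most $-\lambda\calP_1[\rho_f]$, which yields $N^{(1)}=C_\lambda$ (taking, say, half of it).

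The remaining terms must be absorbed. The friction term is $-\intt j_f\cdot\nabla_x\Phi_f\,\dx$, and the term from $\pa_t\Phi_f$ is $\intt j_f\cdot\nabla_x\pa_t\Phi_f\,\dx$. Writing $\nabla_x\pa_t\Phi_f=-\nabla_x(-\Delta_x)^{-1}\nabla_x\cdot j_f$, an $L^2$-bounded projection, this term is at most $\|j_f\|_{L^2}^2$. Now $j_f=\intr(\nabla_v f+vf)\,\dv$, since $\intr\nabla_v f\,\dv=0$. By Cauchy--Schwarz, $|j_f|^2\le\rho_f D_1[f]$, so $\|j_f\|_{L^2}^2\le\Lambda\calD_1[f]$; alternatively one could place it in $\calH_1[f|M_{1,\rho_f}]$ via \eqref{eq: crei}-type arguments. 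The friction term is handled by Young's inequality:
\[
\Big|\intt j_f\cdot\nabla_x\Phi_f\,\dx\Big|\le \e\|\nabla_x\Phi_f\|_{L^2}^2+C_\e\|j_f\|_{L^2}^2 .
\]
By \eqref{eq:blueberry} the first piece is at most $\e C_d\|\rho_f-1\|_{L^2}^2$, which is absorbed into the good term.

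The main obstacle is the anisotropic pressure term $-\intt \nabla_x\cdot(P_f-\rho_f\bbI)\cdot\nabla_x\Phi_f\,\dx=\intt (P_f-\rho_f\bbI):\nabla_x^2\Phi_f\,\dx$. By elliptic regularity on $\T^d$, $\|\nabla_x^2\Phi_f\|_{L^2}\le C_d\|\rho_f-1\|_{L^2}$. So it suffices to show that $\|P_f-\rho_f\bbI\|_{L^2_x}^2$ is controlled by $\calD_1[f]$ together with $\calH_1[f|M_{1,\rho_f}]$, and then to apply Young's inequality again. For this I would write
\[
P_f-\rho_f\bbI=\intr v\otimes(\nabla_vf+vf)\,\dv,
\]
since $\intr v\otimes\nabla_v f\,\dv=-\rho_f\bbI$. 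Cauchy--Schwarz then gives
\[
|P_f-\rho_f\bbI|^2\le \Big(\intr|v|^2f\,\dv\Big)D_1[f]\le E\,D_1[f],
\]
hence $\|P_f-\rho_f\bbI\|_{L^2}^2\le E\,\calD_1[f]$. This is where the energy bound enters and why $N^{(2)}=C_{E,d}$. The delicate points are the pointwise-in-$x$ use of the moment bound $E$ from \eqref{paella} and keeping track of which constants depend on which parameters. Collecting everything, with $\e$ chosen small relative to $C_d$, gives the claimed inequality, with the $\calH_1[f|M_{1,\rho_f}]$ term available to absorb any leftover $\|j_f\|^2$ contributions. Here $j_f=\intr v(f-M_{1,\rho_f})\,\dv$, bounded via a Csiszár--Kullback/weighted argument with constant $C_{\Lambda,d}$.
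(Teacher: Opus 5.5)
Your proposal is correct and follows essentially the paper's proof: you use the same moment-equation decomposition of $\ddt\calC[f]$ into the good term $-\|\rho_f-1\|_{L^2}^2$, the term $S_f:\nabla_x^2\Phi_f$ (handled through $S_f=\intr v\otimes(\nabla_v f+vf)\,\dv$ and Cauchy--Schwarz against $E$), and the two $j_f$ terms, and you close with elliptic estimates, Young's inequality and Lemma \ref{lem:baby1}. The one deviation is your primary bound $\|j_f\|_{L^2}^2\le\Lambda\calD_1[f]$, which is valid but makes $N^{(2)}$ depend on $\Lambda$; to get the stated $N^{(2)}=C_{E,d}$ and $N^{(3)}=C_{\Lambda,d}$, use the alternative you mention, which is what the paper does, namely $|j_f|^2\le 2\rho_f H_1[f|M_{1,\rho_f}]$ obtained by splitting the relative entropy along $M^{(\rho_f,u_f,1)}$, so no weighted Csisz\'ar--Kullback argument is needed.
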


\subsection{Proof of Proposition \ref{prop: linFP}}

Towards the end of a proof of Proposition \ref{prop: linFP}, we first compute the time derivative of the corrector term. The following lemma is similar to  \cite{APT26}.

\begin{lemma}
    For any solution $f$ to \eqref{eq: linFP}, it holds at least formally that
    \[
    \begin{aligned}
    \ddt \calC[f] &= \intt \left(-(\rho_f-1)^2  + S_f:\nabla_x^2\Phi_f -j_f\cdot \nabla_x \Phi_f - j_f \cdot \nabla_x(-\Delta_x)^{-1} \nabla_x\cdot j_f\right)\,\dx\\
    &=:\calI_1+\calI_2+\calI_3+\calI_4,
    \end{aligned}
    \]
    where 
    \[
    S_f:=\intr v\otimes v (f-M_{1,\rho_f})\,\dv.
    \]
\end{lemma}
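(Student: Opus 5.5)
The identity follows by differentiating $\calC[f]=\intt j_f\cdot\nabla_x\Phi_f\,\dx$ with the product rule, so that
\[
\ddt \calC[f] = \intt \pa_t j_f\cdot\nabla_x\Phi_f\,\dx + \intt j_f\cdot\nabla_x\pa_t\Phi_f\,\dx .
\]
Both time derivatives come from the velocity moments of \eqref{eq: linFP}. Integrating the equation in $v$ gives the continuity equation $\pa_t\rho_f+\nabla_x\cdot j_f=0$, since the right-hand side is in divergence form in $v$. Multiplying by $v$ and integrating by parts in $v$ gives
\[
\pa_t j_f + \nabla_x\cdot\intr v\otimes v f\,\dv = -j_f .
\]
Here $\intr v\,\Delta_v f\,\dv=0$, and $\intr v\,\nabla_v\cdot(vf)\,\dv=-\intr vf\,\dv=-j_f$.

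For the second integral, apply $(-\Delta_x)^{-1}$ to the continuity equation. Since $\rho_f-1$ has zero mean and the normalization $\intt\Phi_f=0$ is preserved in time, this gives $\pa_t\Phi_f=-(-\Delta_x)^{-1}\nabla_x\cdot j_f$. Hence
\[
\intt j_f\cdot\nabla_x\pa_t\Phi_f\,\dx = -\intt j_f\cdot\nabla_x(-\Delta_x)^{-1}\nabla_x\cdot j_f\,\dx ,
\]
which is $\calI_4$. The operator $\nabla_x(-\Delta_x)^{-1}\nabla_x\cdot$ is well defined on $\T^d$, because $\nabla_x\cdot j_f$ has zero mean.

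For the first integral, split the second moment as
\[
\intr v\otimes v f\,\dv = S_f + \intr v\otimes v\, M_{1,\rho_f}\,\dv = S_f + \rho_f \mathbb{I},
\]
since the covariance of the unit-temperature Maxwellian is the identity. The damping term $-j_f$ contributes $-\intt j_f\cdot\nabla_x\Phi_f\,\dx=\calI_3$. Integrating by parts in $x$ on the torus (no boundary terms) gives
\[
-\intt (\nabla_x\cdot S_f)\cdot\nabla_x\Phi_f\,\dx=\intt S_f:\nabla_x^2\Phi_f\,\dx=\calI_2 ,
\]
and
\[
-\intt \nabla_x\rho_f\cdot\nabla_x\Phi_f\,\dx=\intt \rho_f\,\Delta_x\Phi_f\,\dx=-\intt\rho_f(\rho_f-1)\,\dx .
\]
Because $\intt(\rho_f-1)\,\dx=0$, this last quantity equals $-\intt(\rho_f-1)^2\,\dx=\calI_1$. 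Summing the four pieces gives the stated identity.

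No step is a real obstacle, since the statement is only claimed formally. The points that need care are tracking signs, in particular the sign of the drift moment $-j_f$, and preserving the zero-mean normalization of $\Phi_f$ when commuting $\pa_t$ with $(-\Delta_x)^{-1}$. A rigorous justification would require enough moments and regularity for the integrations by parts in $v$ and $x$, and that is not pursued here.
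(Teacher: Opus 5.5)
Your proposal is correct and follows the paper's argument essentially step for step. You derive the momentum equation $\pa_t j_f = -\nabla_x\rho_f - \nabla_x\cdot S_f - j_f$ via $\intr v\otimes v\,M_{1,\rho_f}\,\dv = \rho_f\bbI_d$, get $\calI_1,\calI_2$ by integrating by parts on $\T^d$ using $-\Delta_x\Phi_f = \rho_f - 1$ and $\intt\rho_f\,\dx = 1$, and get $\calI_4$ from the continuity equation through $\pa_t\Phi_f = -(-\Delta_x)^{-1}\nabla_x\cdot j_f$, with the signs matching the statement.
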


\begin{proof}
    By testing $v$ against \eqref{eq: linFP} and exploiting the fact that $\intr v \otimes v M_{1,\rho_f}\,\dv=\rho_f \bbI_d$, we obtain
    \[
    \pa_t j_f = -\nabla_x \cdot \lt(\intr v\otimes v f\,\dv\rt) - j_f = -\nabla_x \rho_f - \nabla_x \cdot S_f - j_f.
    \]
Then we notice that
\[
    \begin{aligned}
    \ddt \calC[f] &= \intt  (-\nabla_x\rho_f - \nabla \cdot S_f -j_f)\cdot \nabla_x \Phi_f  + j_f\cdot \p_t \nabla_x \Phi_f \,\dx
    \end{aligned}
    \]
  On the one hand, the terms $\calI_1$ and $\calI_2$ are easily obtained through the divergence theorem, where we use the fact that $(-\Delta_x)\Phi_f = \rho_f-1$ and $\intt \rho_f\,\dx=1$. On the other hand, $\calI_4$ is obtained by using the fact that $\rho_f$ satisfies the continuity equation $\pa_t \rho_f + \nabla_x \cdot j_f =0$, obtained by integrating \eqref{eq: linFP} over $\R^d_v$. 
\end{proof}

\begin{lemma}[Estimate of $\calI_2$] For any $f$ satisfying the conditional bounds \eqref{paella}, there holds
\[
 \calI_2 \le N^{(2)}\calD_1[f] + \frac{1}{4}\|\rho_f-1\|_{L^2}^2
\] 
for some $N^{(2)} = C_{E,d}>0$.

\end{lemma}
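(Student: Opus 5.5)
Since $\calI_2 = \intt S_f:\nabla_x^2\Phi_f\,\dx$, we bound $S_f$ pointwise in $x$ by the Fisher-type dissipation $D_1[f]$, and $\nabla_x^2\Phi_f$ by elliptic regularity. The key observation is that $S_f$ can be rewritten through an integration by parts in $v$. For each fixed $x$,
\[
\intr v\otimes v\,(\nabla_v f + vf)\cdot e_k\,\dv
\]
relates to second moments. Concretely, for indices $i,j$,
\[
\intr v_i (\pa_{v_j} f + v_j f)\,\dv = -\delta_{ij}\rho_f + \intr v_iv_j f\,\dv = (S_f)_{ij},
\]
using $\intr v_i v_j M_{1,\rho_f}\,\dv = \rho_f\delta_{ij}$. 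Hence $S_f = \intr v\otimes(\nabla_v f+vf)\,\dv$ exactly, with no remainder.

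Next I apply Cauchy--Schwarz in $v$, writing $\nabla_v f + vf = \sqrt f\cdot \frac{\nabla_v f+vf}{\sqrt f}$:
\[
|S_f| \le \Big(\intr |v|^2 f\,\dv\Big)^{1/2}\Big(\intr \frac{|\nabla_v f+vf|^2}{f}\,\dv\Big)^{1/2} \le E^{1/2} D_1[f]^{1/2},
\]
which uses the energy bound in \eqref{paella}. This is where $E$ enters the constant.

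Then I integrate in $x$ and use Cauchy--Schwarz together with the standard elliptic estimate on the torus, $\|\nabla_x^2\Phi_f\|_{L^2}\le C_d\|\Delta_x\Phi_f\|_{L^2} = C_d\|\rho_f-1\|_{L^2}$, which follows by Fourier series (the Riesz transforms are bounded on $L^2(\T^d)$ with constant $1$). This gives
\[
\calI_2 \le C_d E^{1/2}\,\calD_1[f]^{1/2}\|\rho_f-1\|_{L^2} \le C_d^2 E\,\calD_1[f] + \frac14\|\rho_f-1\|_{L^2}^2
\]
by Young's inequality, so $N^{(2)} = C_d^2E$.

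The only delicate point is the identity for $S_f$: it requires the boundary terms in the $v$-integration by parts to vanish, i.e.\ enough decay of $f$ in $v$. This holds at the formal level adopted in the paper. No lower bound on $\rho_f$ is needed here.
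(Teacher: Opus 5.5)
Your proposal is correct and follows essentially the same route as the paper: you rewrite $S_f=\intr v\otimes(\nabla_v f+vf)\,\dv$, apply Cauchy--Schwarz in $v$ against the energy bound $E$, then use the $L^2$ Calder\'on--Zygmund estimate for $\nabla_x^2\Phi_f$ and Young's inequality. This yields the same constant $N^{(2)}=C_d^2E$ as in the paper.
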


\begin{proof}
    We notice that
\[
S_f = \intr v\otimes v f\,\dv - \lt(\intr f\,\dv\rt)\bbI_d \\
= \intr v \otimes (vf + \nabla_v f)\,\dv,
\]
which implies
\[
|S_f|\le \lt(\intr |v|^2 f\,\dv\rt)^{\frac{1}{2}}(D_1[f])^{\frac{1}{2}} \le E^{\frac{1}{2}}(D_1[f])^{\frac{1}{2}}.
\]
Therefore, Young's inequality shows
\[
\intt S_f:\nabla_x^2\Phi_f\,\dx \le \frac{E}{2\e}\intt D_1[f]\,\dx + \frac{\e}{2}\intt |\nabla_x^2 \Phi_f|^2\,\dx 
\]
for any $\e>0$. Thanks to the Calderon-Zygmund inequality we know that $\|\nabla_x^2 \Phi_f\|_{L^2}\le C_d\|\rho_f-1\|_{L^2}$; hence, choosing $\e = \frac{1}{2C_d^2}$ we obtain
\[
\intt S_f:\nabla_x^2\Phi_f\,\dx \le {EC_d^2}\calD_1[f] + \frac{1}{4}\|\rho_f-1\|_{L^2}^2.
\]
\end{proof}

\begin{lemma}[Estimates of $\calI_3$ and $\calI_4$] \label{lem:lin:i3i4}
    For any $f$ satisfying the conditional bounds \eqref{paella},
    \begin{equation*}
        \calI_3 + \calI_4 \le N^{(3)} \calH_1[f|M_{1,\rho_f}] + \frac{1}{4} \|\rho_f - 1\|_{L^2}^2,
    \end{equation*}
    where $N^{(3)}=C_{\Lambda, d}>0$.
\end{lemma}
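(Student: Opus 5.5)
We need to bound $\calI_3=-\intt j_f\cdot\nabla_x\Phi_f\,\dx$ and $\calI_4=-\intt j_f\cdot\nabla_x(-\Delta_x)^{-1}\nabla_x\cdot j_f\,\dx$. The plan is to control both by $\|j_f\|_{L^2}^2$, and then bound $\|j_f\|_{L^2}^2$ by the local relative entropy $\calH_1[f|M_{1,\rho_f}]$ rather than by the full entropy as in \eqref{eq:peach}.

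\textbf{The term $\calI_4$.} This term has a good sign. Integrating by parts, and writing $\Psi=(-\Delta_x)^{-1}\nabla_x\cdot j_f$, we get
\[
\calI_4=\intt (\nabla_x\cdot j_f)\,\Psi\,\dx=-\intt |\nabla_x\Psi|^2\,\dx\le 0,
\]
since $\nabla_x\cdot j_f=-\Delta_x\Psi$. Alternatively, by Plancherel $\calI_4$ is minus the $L^2$ norm squared of the longitudinal part of $j_f$. Either way we can simply discard it, or bound $|\calI_4|\le\|j_f\|_{L^2}^2$ since the Leray-type projection has norm at most one.

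\textbf{The term $\calI_3$.} By Cauchy--Schwarz, Young and \eqref{eq:blueberry},
\[
|\calI_3|\le \|j_f\|_{L^2}\,C_d\|\rho_f-1\|_{L^2}\le C_d^2\|j_f\|_{L^2}^2+\tfrac14\|\rho_f-1\|_{L^2}^2.
\]

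\textbf{Bounding the current.} The key step is to show $|j_f|^2\le 2\rho_f H_1[f|M_{1,\rho_f}]$ pointwise in $x$. I would redo the decomposition from the proof of Lemma \ref{lem: m=1 comp}, now relative to $M_{1,\rho_f}=M^{(\rho_f,0,1)}$:
\[
H_1[f|M_{1,\rho_f}]=\intr f\log\frac{f}{M^{(\rho_f,u_f,1)}}\,\dv+\intr f\log\frac{M^{(\rho_f,u_f,1)}}{M^{(\rho_f,0,1)}}\,\dv .
\]
The first integral is nonnegative by Jensen. The second equals $\intr f\,(v\cdot u_f-|u_f|^2/2)\,\dv=\rho_f|u_f|^2/2=|j_f|^2/(2\rho_f)$, because the $\log\rho_f$ terms now cancel. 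Using $\rho_f\le\Lambda$ and integrating in $x$ gives
\[
\|j_f\|_{L^2}^2\le 2\Lambda\,\calH_1[f|M_{1,\rho_f}].
\]
Combining the three steps yields the claim with $N^{(3)}=2\Lambda C_d^2$, or $2\Lambda(C_d^2+1)$ if $\calI_4$ is bounded rather than discarded.

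The main (mild) obstacle is recognizing that the relevant entropy is the local one, so that the $\rho_f\log\rho_f$ term disappears. Everything else is Cauchy--Schwarz and elliptic estimates.
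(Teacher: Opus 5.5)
Your treatment of $\calI_3$ and of the current is exactly the paper's proof: the pointwise bound $|j_f|^2\le 2\rho_f H_1[f|M_{1,\rho_f}]$ via the intermediate Maxwellian $M^{(\rho_f,u_f,1)}$ (where the $\log\rho_f$ terms cancel), then Cauchy--Schwarz and Young with \eqref{eq:blueberry}. Your fallback estimate $|\calI_4|\le\|j_f\|_{L^2}^2$ is also what the paper does. The paper writes $C_d\|j_f\|_{L^2}^2$ via Calder\'on--Zygmund, and your projection remark gives constant $1$.

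However, your claim that $\calI_4$ has a good sign and can be discarded is false. You correctly obtain $\calI_4=\intt(\nabla_x\cdot j_f)\Psi\,\dx$, but since $\nabla_x\cdot j_f=-\Delta_x\Psi$,
\[
\calI_4=\intt(-\Delta_x\Psi)\,\Psi\,\dx=+\intt|\nabla_x\Psi|^2\,\dx\ \ge 0 .
\]
Equivalently, the operator $\nabla_x(-\Delta_x)^{-1}\nabla_x\cdot$ has symbol $-k\otimes k/|k|^2$. It is therefore minus the projection onto gradient fields, so $\calI_4$ is plus, not minus, the squared $L^2$ norm of the longitudinal part of $j_f$.

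A concrete check: if $j_f=\nabla_x\phi$, then $\nabla_x\Psi=-j_f$ and $\calI_4=\|j_f\|_{L^2}^2$ exactly. This is the standard bad term produced by $\partial_t\nabla_x\Phi_f$ in such correctors. It is precisely why $\|j_f\|_{L^2}^2$ has to be controlled by the local relative entropy $\calH_1[f|M_{1,\rho_f}]$ rather than dropped.

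Consequently the version with $N^{(3)}=2\Lambda C_d^2$ is not justified. Only your second option, bounding $|\calI_4|\le\|j_f\|_{L^2}^2\le 2\Lambda\,\calH_1[f|M_{1,\rho_f}]$ and giving $N^{(3)}=2\Lambda(C_d^2+1)$, proves the lemma. With that choice your argument coincides with the paper's.
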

\begin{proof}
We estimate $\calI_3$ first. Recall that, in the same way as in the proof of Lemma \ref{lem: m=1 comp}, there holds
\[
\begin{aligned}
H_1[f|M_{1,\rho_f}] &= \intr f \log \frac{f}{M^{(\rho_f,u_f,1)}}\,\dv + \intr f \log \frac{M^{(\rho_f,u_f,1)}}{M^{(\rho_f,0,1)}}
\,\dv \\
&\ge \intr f \log \frac{M^{(\rho_f,u_f,1)}}{M^{(\rho_f,0,1)}}
\,\dv = \frac{|j_f|^2}{2\rho_f}.
\end{aligned}
\]
Since $\rho_f \le \Lambda$, we have
\begin{equation}\label{eq: j2}
|j_f|^2 \le 2\Lambda H_1[f|M_{1,\rho_f}].
\end{equation}
Thus,
\begin{align*}
    \intt j_f\cdot \nabla_x \Phi_f\,\dx \le \|j_f\|_{L^2} \|\nabla_x\Phi_f \|_{L^2} &\le (2\Lambda)^{\frac{1}{2}}C_d\lt(\calH_1[f|M_{1,\rho_f}] \rt)^{\frac{1}{2}}  \|\rho_f - 1\|_{L^2}  \\
    &\le (2\Lambda)C_d^2 \calH_1[f|M_{1,\rho_f}] + \frac{1}{4}\|\rho_f - 1\|_{L^2}^2.
\end{align*}
Next, we estimate $\calI_4$. Next, we can infer that
\begin{equation*}
    |\calI_4| = \lt| \intt j_f \nabla_x(-\Delta_x)^{-1} \nabla_x\cdot j_f \,\dx \rt| \le C_d\|j_f\|_{L^2}^2\\
    \le C_d(2\Lambda)  \calH_1[f|M_{1,\rho_f}],
\end{equation*}
by Calderon-Zygmund type argument, where we used \eqref{eq: j2} in the last inequality.
This completes the proof.
\end{proof}

\

We now conclude the proof of Proposition \ref{prop: linFP}. First, we recall from Lemma \ref{lem:baby1} that
    \[
    \calI_1 = \intt -(\rho_f-1)^2\,\dx \le -\lambda \calP_1[\rho_f].
    \]
We finish by combining the previous three lemmas.

The proof of Theorem \ref{thm: main} for the case $m=1$ is then an application of Proposition \ref{prop: linFP}. Its proof is in fact universal for both $m=1$ and $m>1$. Therefore, we first prove the analogue of Proposition \ref{prop: linFP} for the case $m>1$ in the next section. Then, we present the proof of Theorem \ref{thm: main} in Section \ref{sec: thm}.


\section{The nonlinear case $m>1$} \label{sec: nonlin}
In this section we move on to our main goal of $m>1$ in
\begin{equation}\label{eq: pmFP}
 (\pa_t + v\cdot \nabla_x)f = \Delta_v f^m + \nabla_v \cdot (vf).
\end{equation}

\subsection{Set-up}
We recall the entropy associated to \eqref{eq: pmFP}:
\[
H_m[f]:= \intr \lt(\frac{1}{2}|v|^2 + \frac{f^{m-1}}{m-1}\rt)f\,\dv,
\]
and the dissipation
\[
D_m[f]:= \intr f\lt|\nabla_v \lt( \frac{m}{m-1}f^{m-1} + \frac{|v|^2}{2} \rt)\rt|^2 \,\dv. 
\]
Also, for $m>1$, the relative entropy is now defined as
\[
H_m[f|g]:=H_m[f]-H_m[g].
\]
It is well-known that, up to a translation, the global minimizer of $H_m[f]$ under the mass constraint $\int f \,dv=\rho_f$ is given by $M_{m,\rho_f}$, see \cite{O01,CJMTU01}. In particular, this implies
$H_m[f|M_{m,\rho_f}]\ge 0$.
We also point that by a direct computation, it holds $H_m[M_{m,\rho}]=c_{m,d} \rho^\gamma /\gamma$. Also, for later use, we record that
\begin{equation}\label{iberico}
    \lt(\intr M_{m,\rho_f}^m\,\dv\rt)\bbI_d = (\gamma-1)\frac{c_{m,d}}{\gamma} \rho_f^\gamma \bbI_d = \intr (v\otimes v) M_{m,\rho_f} \,\dv.
\end{equation}

By means of the generalized logarithmic Sobolev inequality \cite[Corollary 13]{DPD02}, the lower bound for the entropy dissipation rate holds:
\begin{equation}\label{eq:LSIm}
 D_m[f] \ge 2H_m[f|M_{m,\rho_f}].
\end{equation}

We now set
\begin{equation*}
    \calH_m[\cdot] := \intt H_m[\cdot] \,\dx, \quad \calD_m[\cdot] := \intt D_m[\cdot] \,\dx,
\end{equation*}
and use the analogous notation $\calH_m[f|g] = \calH_m[f] - \calH_m[g]$. Since $\calH[M_m] =c_{m,d}/\gamma$ is constant, \eqref{eq:LSIm} thus provides
\begin{align*}
\ddt \calH_m[f|M_m] = \ddt \calH_m[f] \le -\calD_m[f]&\leq -\frac12 \calD_m[f]-\calH_m[f|M_{m,\rho_f}] \\&= -\frac12 \calD_m[f]-\calH_m[f|M_m] + \calH_m[M_{m,\rho_f}|M_m].
\end{align*}
It remains to get decay for:
\begin{equation*}
\calH_m[M_{m,\rho_f}|M_m] = \frac{c_{m,d}}{\gamma} \intt  (\rho_f^\gamma -1) \,\dx =: \calP_\gamma[\rho_f]. 
\end{equation*}
Notice that $P_\gamma[\rho_f]\ge0$ thanks to Jensen's inequality since $\intt \rho_f\,\dx=\intt 1\,\dx$.

Because of the constants arising in the Barenblatt profile, it is more cumbersome compared to Section \ref{sec: lin} when it comes down to keeping track of the constants. Therefore, we will merely denote $X\lesssim_a Y$ if $X\le C_a Y$ for some constant $C_a>0$ depending on $a$.

\subsection{The corrector term}
We use the same corrector term \eqref{coriolis} as in the previous section:
\[
\calC[f]:=\intt j_f \cdot \nabla_x \Phi_f\,\dx.
\]

As in Lemma \ref{lem: m=1 comp}, we need to first control the crossing term $\calC[f]$ in terms of $\calH_m[f|M_m]$. First, we establish a result analogous to Lemma \ref{lem:baby1}.
\begin{lemma}\label{lem:babym}
   If $\rho_f \ge 0$ satisfies $\intt \rho_f\,\dx =1$ and $\lambda \le \rho_f \le \Lambda$ for some $0< \lambda \le \Lambda <+\infty$, then it holds that
   \[
    \|\rho_f-1\|_{L^2}^2 \lesssim_{\lambda,\Lambda,\gamma} \calP_\gamma[\rho_f]\lesssim_{\lambda,\Lambda,\gamma}  \|\rho_f-1\|_{L^2}^2 .
   \]
\end{lemma}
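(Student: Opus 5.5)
We mimic the proof of Lemma \ref{lem:baby1}, applying Taylor's formula to the convex function $\psi_\gamma(x) := x^\gamma$ around $x=1$ instead of to $x\log x$. Recall that
\[
\calP_\gamma[\rho_f] = \frac{c_{m,d}}{\gamma}\intt (\rho_f^\gamma - 1)\,\dx .
\]
Since $\intt \rho_f\,\dx = 1 = \intt 1\,\dx$, we have $\intt (\rho_f - 1)\,\dx = 0$. We may therefore subtract the linear term for free and write
\[
\calP_\gamma[\rho_f] = \frac{c_{m,d}}{\gamma}\intt \big(\rho_f^\gamma - 1 - \gamma(\rho_f-1)\big)\,\dx .
\]

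For each $x$, Taylor's theorem with Lagrange remainder gives
\[
\rho_f^\gamma - 1 - \gamma(\rho_f - 1) = \frac{\gamma(\gamma-1)}{2}\,\tilde\rho^{\,\gamma-2}(\rho_f-1)^2
\]
for some $\tilde\rho = \tilde\rho(x)$ lying between $1$ and $\rho_f(x)$. Because $\lambda \le \rho_f \le \Lambda$ and $\intt\rho_f\,\dx=1$ on the unit torus, we have $\lambda \le 1 \le \Lambda$. Hence $\tilde\rho \in [\lambda,\Lambda]$ as well.

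Since $\gamma > 1$, the factor $\tilde\rho^{\,\gamma-2}$ is bounded above and below by positive constants:
\[
\min(\lambda^{\gamma-2},\Lambda^{\gamma-2}) \le \tilde\rho^{\,\gamma-2} \le \max(\lambda^{\gamma-2},\Lambda^{\gamma-2}).
\]
Which endpoint attains the bound depends on whether $\gamma<2$ or $\gamma \ge 2$; in fact $\gamma < 2$ whenever $d\ge 2$ or $m>2$. Integrating the Taylor identity over $\T^d$ then yields both inequalities. The constants are $\frac{c_{m,d}(\gamma-1)}{2}$ times these extremes. The dependence on $c_{m,d}$, and hence on $m$ and $d$, is harmless since $\gamma$ and $c_{m,d}$ are determined by $m,d$.

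There is no real obstacle here. The only points requiring care are that the zero-mean condition kills the linear Taylor term, and that $\gamma>1$ ensures $\gamma(\gamma-1)>0$, so the lower bound is nondegenerate.
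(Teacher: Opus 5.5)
Your proof is correct and is the same argument as the paper's: a second-order Taylor expansion of $z\mapsto z^\gamma$ about $1$, with the linear term removed by $\intt(\rho_f-1)\,\dx=0$. Your remainder $\frac{\gamma(\gamma-1)}{2}\tilde\rho^{\,\gamma-2}$ is in fact the correct form, and your observation that $\lambda\le 1\le\Lambda$ justifies $\tilde\rho\in[\lambda,\Lambda]$, which the paper leaves implicit. One aside is wrong but harmless: $m>2$ does not force $\gamma<2$, since for $d=1$ one has $\gamma<2$ iff $m<3$ (e.g.\ $d=1$, $m=4$ gives $\gamma=11/5$); your min/max bound over both endpoints does not depend on this.
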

\begin{proof}
    Define $\psi_\gamma(z)= z^\gamma-1$ for $z>0$. For $z \in [\lambda,\Lambda]$, we have
    \[
    \psi_\gamma(z) = \gamma(z-1) + {\gamma(\gamma-1)}{\tilde{z}}^{2-\gamma}(z-1)^2
    \]
    for some $\tilde{z} \in [\lambda, \Lambda]$. Putting $z=\rho_f$ and then integrating over $\T^d$, we obtain the desired inequality because $\intt \rho_f\,\dx = 1$.
\end{proof}

We then prove the analogue of Lemma \ref{lem: m=1 comp}:

\begin{lemma}\label{lem: comp}

For any $f$ satisfying the conditional bound \eqref{paella}, it holds that
\[
|\calC[f]| \lesssim_{\Lambda, \gamma, d} \calH_m[f|M_m].
\]
\end{lemma}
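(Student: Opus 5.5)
We bound $|\calC[f]|\le \|j_f\|_{L^2}\|\nabla_x\Phi_f\|_{L^2}\le \|j_f\|_{L^2}\,C_d\|\rho_f-1\|_{L^2}$ by Cauchy--Schwarz and the energy estimate \eqref{eq:blueberry}, exactly as in Lemma \ref{lem: m=1 comp}. Lemma \ref{lem:babym} gives $\|\rho_f-1\|_{L^2}^2\lesssim \calP_\gamma[\rho_f]$. Since
\[
\calH_m[f|M_m]=\intt H_m[f|M_{m,\rho_f}]\,\dx+\calP_\gamma[\rho_f],
\]
with both terms nonnegative, it remains to prove a pointwise-in-$x$ bound of the form
\[
|j_f|^2\lesssim_{\Lambda,\gamma,d} H_m[f|M_{m,\rho_f}].
\]
Integrating this over $\T^d$ then gives $\|j_f\|_{L^2}^2\lesssim \calH_m[f|M_m]$, and the claim follows by Young's inequality.

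To prove the momentum bound we mimic \eqref{eq: crei}, replacing the Maxwellian by the shifted Barenblatt $M_{m,\rho_f}(\cdot-u_f)$ with $u_f=j_f/\rho_f$. The functional $\int \frac{f^m}{m-1}\,\dv$ is translation invariant, and among densities of mass $\rho_f$ it is minimized, together with the confinement term, by $M_{m,\rho_f}$. We write
\[
H_m[f]=\intr\Big(\tfrac12|v-u_f|^2+\tfrac{f^{m-1}}{m-1}\Big)f\,\dv+\intr\Big(v\cdot u_f-\tfrac12|u_f|^2\Big)f\,\dv .
\]
The second integral equals $\rho_f|u_f|^2-\tfrac12\rho_f|u_f|^2=\tfrac{|j_f|^2}{2\rho_f}$. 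The first integral is $H_m[f(\cdot+u_f)]$, which is $\ge H_m[M_{m,\rho_f}]$ by the minimization property quoted from \cite{O01,CJMTU01}; the translated function has the same mass $\rho_f$. Hence
\[
H_m[f|M_{m,\rho_f}]\ge \frac{|j_f|^2}{2\rho_f}\ge\frac{|j_f|^2}{2\Lambda}.
\]

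The whole argument is therefore a reduction to the Galilean-shift identity. The main point to check is that this decomposition is legitimate in the nonlinear setting, which needs two things. First, the $f^m$ part is unchanged by translation. Second, the minimizer statement holds with $\rho_f$ as a parameter, so the constant $H_m[M_{m,\rho_f}]$ appears exactly, without any $\rho$-dependent error.

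Both hold. The conditional bound on $E$ (or $\Lambda_m$) is only needed to guarantee that the quantities involved are finite, not for the constants. The resulting constant is $\Lambda^{1/2}C_d$ times constants coming from Lemma \ref{lem:babym}.
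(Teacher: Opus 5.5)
Your argument is correct and follows the paper's proof. Both use Cauchy--Schwarz, bound $\|\nabla_x\Phi_f\|_{L^2}$ by $\calP_\gamma[\rho_f]^{1/2}$ via Lemma \ref{lem:babym}, and bound $\|j_f\|_{L^2}^2$ by $2\Lambda\,\calH_m[f|M_m]$ through the momentum inequality \eqref{eq: min pri}. The only difference is that you derive \eqref{eq: min pri} yourself from the Galilean shift and the mass-constrained minimality of $M_{m,\rho_f}$ (with $H_m[M_{m,\rho}]=c_{m,d}\rho^\gamma/\gamma$) rather than citing it, which also gives the pointwise form $|j_f|^2\le 2\Lambda H_m[f|M_{m,\rho_f}]$ that the later estimate of $\calJ_3,\calJ_4$ actually requires.
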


\begin{proof}
The Cauchy--Schwarz inequality gives
\[
|\calC[f]| \le \|j_f\|_{L^2}\|\nabla_x \Phi_f\|_{L^2}.
\]
By the Poincar\'e and Calder\'on--Zygmund inequalities, the second term is easily bounded as
\[
\|\nabla_x \Phi_f\|_{L^2} \lesssim_d \|\nabla_x^2 \Phi_f\|_{L^2} \lesssim_d \|\rho_f -1\|_{L^2} \lesssim_{\lambda,\Lambda,\gamma} (\calP_\gamma[\rho_f])^{\frac{1}{2}},
\]
where the last inequality comes from Lemma \ref{lem:babym}.
Then, by recalling that $H[f|M_{m,\rho_f}|\ge 0$, we can estimate $\calP_\gamma[\rho_f]$ as:
\begin{align*}
    \calH_m[f|M_m] &= \calH_m[f|M_{m,\rho_f}] + \calH_m[M_{m,\rho_f}|M_m] \\
    &\ge \calH_m[M_{m,\rho_f}|M_m] =\calP_\gamma[\rho_f].
\end{align*}
On the other hand, to control $\|j_f\|_{L^2}$, we recall the following inequality that comes from optimality conditions of related functionals  \cite{BV05,B99,KS25}:
\begin{equation}\label{eq: min pri}
H_m[f] = \intr \lt(\frac{1}{2}|v|^2 f + \frac{f^m}{m-1} \rt)\,\dv \ge \frac{|j_f|^2}{2\rho_f} + \frac{c_{m,d}}{\gamma} \rho_f^\gamma,
\end{equation}
which holds as long as $\rho_f>0$. Otherwise, the left hand side is zero. This shows that
\[
H_m[f|M_m]\ge \frac{|j_f|^2}{2\rho_f}+\frac{c_{m,d}}{\gamma}(\rho_f^\gamma-1).
\]
Hence, since $\rho_f \le \Lambda$, integrating both sides of \eqref{eq: min pri} over $\T^d$ shows (remembering that $\calP_\gamma[f]\ge 0$)
\begin{equation}\label{eq/j2}
\calH_m[f|M_m] \ge \intt \frac{|j_f|^2}{2\rho_f}\,\dx \gtrsim_{\Lambda, \gamma} \|j_f\|_{L^2}^2. 
\end{equation}
This completes the proof.
\end{proof}

\

Our goal is to prove that this corrector term induces dissipation of $\calP_\gamma[\rho_f]$.

\begin{proposition}\label{prop: corrector}
We assume that $f$ is a  solution to \eqref{eq: pmFP} satisfying the conditional bounds \eqref{paella}. Then it holds for every $t\ge T_0$ that
    \begin{equation*}
        \ddt \calC[f] \le - N^{(1)} \calP_\gamma[\rho_f] + N^{(2)} \calD_m[f] + N^{(3)}\calH_m[f|M_{m,\rho_f}]
    \end{equation*}
    for some constants $N^{(i)}>0$ independent of time.
\end{proposition}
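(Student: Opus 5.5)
We follow the template of the proof of Proposition \ref{prop: linFP}, replacing the Maxwellian moment identity by \eqref{iberico}. We begin by testing \eqref{eq: pmFP} against $v$. Since $\intr v\,\Delta_v f^m\,\dv = 0$ and $\intr v\,\nabla_v\cdot(vf)\,\dv=-j_f$, we obtain
\[
\pa_t j_f = -\nabla_x\cdot\Big(\intr v\otimes v f\,\dv\Big) - j_f .
\]
We then split the second moment as $\intr v\otimes v f\,\dv = p(\rho_f)\bbI_d + S_f$, where $p(\rho):=(\gamma-1)\frac{c_{m,d}}{\gamma}\rho^\gamma$ and $S_f:=\intr v\otimes v\,(f-M_{m,\rho_f})\,\dv$, using \eqref{iberico}. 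As in the linear case, the continuity equation $\pa_t\rho_f+\nabla_x\cdot j_f=0$ then gives
\[
\ddt\calC[f] = \intt\Big(-\nabla_x p(\rho_f)\cdot\nabla_x\Phi_f + S_f:\nabla_x^2\Phi_f - j_f\cdot\nabla_x\Phi_f - j_f\cdot\nabla_x(-\Delta_x)^{-1}\nabla_x\cdot j_f\Big)\dx =: \calI_1+\calI_2+\calI_3+\calI_4 .
\]

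\emph{Term $\calI_1$.} Integrating by parts gives $\calI_1=-\intt (p(\rho_f)-p(1))(\rho_f-1)\,\dx$; the constant $p(1)$ may be inserted because $\intt(\rho_f-1)\,\dx=0$. Since $\lambda\le\rho_f\le\Lambda$, the mean value theorem yields $(p(\rho_f)-p(1))(\rho_f-1)\ge (\gamma-1)c_{m,d}\min(\lambda^{\gamma-1},1)(\rho_f-1)^2$. Hence $\calI_1\le -c\|\rho_f-1\|_{L^2}^2$, and Lemma \ref{lem:babym} converts this into $\calI_1\le -2N^{(1)}\calP_\gamma[\rho_f]$.

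\emph{Terms $\calI_3$ and $\calI_4$.} These are handled exactly as in Lemma \ref{lem:lin:i3i4}. The only input needed is a pointwise bound $|j_f|^2\le 2\Lambda H_m[f|M_{m,\rho_f}]$, which follows from \eqref{eq: min pri} because $H_m[M_{m,\rho_f}]=\frac{c_{m,d}}{\gamma}\rho_f^\gamma$. With this bound, Cauchy--Schwarz, Young's inequality, the Poincaré and Calderón--Zygmund inequalities, and boundedness of the Leray-type projector on $L^2$ give
\[
\calI_3+\calI_4\le C\,\calH_m[f|M_{m,\rho_f}]+\tfrac{c}{4}\|\rho_f-1\|_{L^2}^2 .
\]

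\emph{Term $\calI_2$ (the main obstacle).} This is the step where the linear identity $v f+\nabla_v f$ must be replaced. We plan to write
\[
S_f=\intr v\otimes v f\,\dv-\Big(\intr f^m\,\dv\Big)\bbI_d+\Big(\intr (f^m-M_{m,\rho_f}^m)\,\dv\Big)\bbI_d ,
\]
where we have used \eqref{iberico} once more. The first difference equals $\intr v\otimes(vf+\nabla_v f^m)\,\dv=\intr v\otimes f\,\nabla_v\big(\tfrac{m}{m-1}f^{m-1}+\tfrac{|v|^2}{2}\big)\dv$. By Cauchy--Schwarz, its size is at most $E^{1/2}D_m[f]^{1/2}$. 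The remaining scalar is $\intr f^m\,\dv-\intr M_{m,\rho_f}^m\,\dv$; to control it we must show that $\intr(f^m-M_{m,\rho_f}^m)\,\dv$ is bounded by the dissipation or the local relative entropy. Our first attempt is to take the trace of the first identity: this gives $\intr|v|^2f\,\dv-d\intr f^m\,\dv=O(E^{1/2}D_m^{1/2})$. Combining it with the splitting $(m-1)H_m[f]=\frac{m-1}{2}\intr|v|^2f\,\dv+\intr f^m\,\dv$, and with the corresponding virial identity $\intr|v|^2M_{m,\rho_f}\,\dv=d\intr M_{m,\rho_f}^m\,\dv$ for the Barenblatt profile, we expect to express
\[
\Big(\tfrac{d(m-1)}{2}+1\Big)\intr(f^m-M_{m,\rho_f}^m)\,\dv=(m-1)H_m[f|M_{m,\rho_f}]+O\big(E^{1/2}D_m^{1/2}\big).
\]
Consequently $|S_f|\lesssim E^{1/2}D_m[f]^{1/2}+H_m[f|M_{m,\rho_f}]$. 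The second term is linear rather than a square root. We therefore use $H_m[f|M_{m,\rho_f}]\le \tfrac12 D_m[f]$ from \eqref{eq:LSIm} together with the a priori bound $H_m[f|M_{m,\rho_f}]\lesssim E+\Lambda_m$ to obtain $H_m[f|M_{m,\rho_f}]^2\lesssim_{E,\Lambda_m} D_m[f]$. This is precisely where $\Lambda_m$ enters. Young's inequality combined with Calderón--Zygmund then yields
\[
\calI_2\le C\,\calD_m[f]+\tfrac{c}{4}\|\rho_f-1\|_{L^2}^2 .
\]

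Finally, summing the estimates for $\calI_1,\dots,\calI_4$ and absorbing the $\|\rho_f-1\|_{L^2}^2$ terms into $\calI_1$ via Lemma \ref{lem:babym} proves the proposition, with $N^{(i)}$ depending only on $\lambda,\Lambda,E,\Lambda_m,m,d$.
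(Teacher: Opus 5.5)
Your proposal is correct. It coincides with the paper's proof in the time-derivative formula, in the treatment of $\calI_1$ (mean value theorem plus Lemma \ref{lem:babym}), in the treatment of $\calI_3,\calI_4$ (via the pointwise bound $|j_f|^2\le 2\Lambda H_m[f|M_{m,\rho_f}]$ from \eqref{eq: min pri}), and in the bound $E^{1/2}D_m[f]^{1/2}$ on $\intr v\otimes(vf+\nabla_v f^m)\,\dv$. The genuine difference is how you control the scalar part $\intr (f^m-M_{m,\rho_f}^m)\,\dv$ of $S_{m,f}$.

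The paper uses the static inequality \eqref{carbonara}, $\intr|f^m-M_{m,\rho_f}^m|\,\dv\lesssim H_m[f|M_{m,\rho_f}]+H_m[f|M_{m,\rho_f}]^{1/2}$. Its proof relies on the comparison \eqref{eq:elementary} and on a rescaled generalized Csisz\'ar--Kullback inequality (Lemma \ref{tianyuan}). The paper then reduces the linear term to $H_m[f|M_{m,\rho_f}]^{1/2}$ using $H_m[f|M_{m,\rho_f}]\le H_m[f]\lesssim_{E,\Lambda_m}1$, which is why its bound on $\calJ_2$ contains $\calH_m[f|M_{m,\rho_f}]$.

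You instead take the trace of the identity for $\intr v\otimes v f\,\dv-(\intr f^m\,\dv)\bbI_d$ and combine it with the virial relation $\intr|v|^2M_{m,\rho_f}\,\dv=d\intr M_{m,\rho_f}^m\,\dv$. Since $\frac{d(m-1)}{2}+1=\frac{m-1}{\gamma-1}$, this yields the exact identity
\[
\intr (f^m-M_{m,\rho_f}^m)\,\dv=(\gamma-1)\Big(H_m[f|M_{m,\rho_f}]-\tfrac12\intr v\cdot(vf+\nabla_v f^m)\,\dv\Big).
\]
You then absorb the linear entropy term into $D_m[f]^{1/2}$ via \eqref{eq:LSIm} and the same a priori bound, so $\Lambda_m$ enters at exactly the same place as in the paper.

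Your route is more elementary and self-contained. It needs neither Csisz\'ar--Kullback nor \eqref{eq:elementary}, no extra regularity such as $f^{m-\frac12}\in\dot H^1$, and no tracking of powers of $\rho_f$ under rescaling. It only controls the signed integral, but that is all that is needed, because this part of $S_{m,f}$ is a multiple of $\bbI_d$. The paper's route, in exchange, produces a dissipation-free $L^1$ bound on $f^m-M_{m,\rho_f}^m$ that is of some independent interest.
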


\subsection{Proof of Proposition \ref{prop: corrector}}

In order to achieve this result, we first take the time derivative of the corrector term, obtaining
\begin{lemma}
For any solution $f$ to \eqref{eq: pmFP}, it holds at least formally that
\[
\begin{aligned}
&\ddt \calC[f] \\
&= \intt \lt(-  (\gamma-1) \frac{c_{m,d}}{\gamma}  (\rho_f^\gamma - 1) (\rho_f - 1) + S_{m,f} : \nabla_x^2 \Phi_f - j_f\cdot \nabla_x\Phi_f + j_f \cdot \nabla_x(-\Delta_x)^{-1}\nabla_x\cdot j_f \rt) \,  \dx \\
&=: \calJ_1 + \calJ_2 + \calJ_3 + \calJ_4,
\end{aligned}
\]
where
\begin{equation*}
    S_{m,f}(t,x) := \intr v\otimes v (f - M_{m,\rho_f}) \,\dv .
\end{equation*}
\end{lemma}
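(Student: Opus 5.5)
The plan is to follow the computation in the linear case step by step. The only new ingredient is the use of identity \eqref{iberico} to split off the "pressure part" of the second velocity moment.

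\emph{Step 1: moment equations.} Integrating \eqref{eq: pmFP} over $\R^d_v$ kills both velocity-divergence terms and gives the continuity equation $\pa_t\rho_f+\nabla_x\cdot j_f=0$.

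Testing \eqref{eq: pmFP} against $v$ gives the momentum equation. For the nonlinear diffusion, two integrations by parts give $\intr v\,\Delta_v f^m\,\dv=0$, since $\Delta_v v=0$ and the boundary terms vanish formally. For the drift, $\intr v\,\nabla_v\cdot(vf)\,\dv=-j_f$. The transport term produces $-\nabla_x\cdot\intr v\otimes v f\,\dv$. By \eqref{iberico} we write
\[
\intr v\otimes v f\,\dv=S_{m,f}+(\gamma-1)\frac{c_{m,d}}{\gamma}\rho_f^\gamma\,\bbI_d .
\]
Hence
\[
\pa_t j_f=-(\gamma-1)\frac{c_{m,d}}{\gamma}\nabla_x\rho_f^\gamma-\nabla_x\cdot S_{m,f}-j_f .
\]

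\emph{Step 2: differentiate the corrector.} We have
\[
\ddt\calC[f]=\intt \pa_t j_f\cdot\nabla_x\Phi_f\,\dx+\intt j_f\cdot\pa_t\nabla_x\Phi_f\,\dx .
\]
We insert the momentum equation into the first integral and integrate by parts in $x$ on $\T^d$.

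For the pressure term, using $\Delta_x\Phi_f=1-\rho_f$,
\[
-\intt\nabla_x\rho_f^\gamma\cdot\nabla_x\Phi_f\,\dx=\intt\rho_f^\gamma\Delta_x\Phi_f\,\dx=-\intt\rho_f^\gamma(\rho_f-1)\,\dx=-\intt(\rho_f^\gamma-1)(\rho_f-1)\,\dx .
\]
The last equality subtracts $\intt(\rho_f-1)\,\dx=0$, which holds by unit mass. This yields $\calJ_1$.

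The stress term gives $-\intt(\nabla_x\cdot S_{m,f})\cdot\nabla_x\Phi_f\,\dx=\intt S_{m,f}:\nabla_x^2\Phi_f\,\dx$, which is $\calJ_2$. The damping term $-j_f$ gives $\calJ_3$ directly.

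\emph{Step 3: the time derivative of $\Phi_f$.} Applying $(-\Delta_x)^{-1}$ to the continuity equation, with mean zero preserved, gives
\[
\pa_t\nabla_x\Phi_f=-\nabla_x(-\Delta_x)^{-1}\nabla_x\cdot j_f .
\]
This yields $\calJ_4$, exactly as $\calI_4$ in the linear lemma.

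This computation produces $-\intt j_f\cdot\nabla_x(-\Delta_x)^{-1}\nabla_x\cdot j_f\,\dx$. That is the sign of $\calI_4$ in the linear case, whereas the statement as displayed has a plus sign in front of $\calJ_4$. I would flag this sign discrepancy. It is harmless downstream: the term will only be bounded in absolute value by $C_d\|j_f\|_{L^2}^2$ via Calder\'on--Zygmund, as in Lemma~\ref{lem:lin:i3i4}.

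\emph{Main point of care.} There is no real obstacle beyond bookkeeping. The only genuinely nonlinear point is recognizing that the nonlinear diffusion contributes nothing to the momentum balance, while its effect enters through the pressure $\intr M_{m,\rho_f}^m\,\dv\propto\rho_f^\gamma$. That pressure is isolated precisely by \eqref{iberico}, and it is what yields the coercive term $\calJ_1$. By Lemma~\ref{lem:babym} and monotonicity of $z\mapsto z^\gamma$, $\calJ_1$ is comparable to $-\calP_\gamma[\rho_f]$.

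All integrations by parts are formal, in line with the paper's stated a priori framework. They require decay of $f$ and $f^m$ in $v$, which is natural for solutions with finite energy.
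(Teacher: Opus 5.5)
Your proposal is correct and follows essentially the paper's route: derive the momentum equation, use \eqref{iberico} to split $\intr v\otimes v f\,\dv$ into the pressure $(\gamma-1)\frac{c_{m,d}}{\gamma}\rho_f^\gamma\bbI_d$ plus $S_{m,f}$, integrate by parts against $\nabla_x\Phi_f$ using $\intt(\rho_f-1)\,\dx=0$, and use the continuity equation for $\pa_t\nabla_x\Phi_f$. Your sign flag is also right: since $\pa_t\nabla_x\Phi_f=-\nabla_x(-\Delta_x)^{-1}\nabla_x\cdot j_f$, the last term equals $-\intt j_f\cdot\nabla_x(-\Delta_x)^{-1}\nabla_x\cdot j_f\,\dx=\|\nabla_x(-\Delta_x)^{-1}\nabla_x\cdot j_f\|_{L^2}^2\ge 0$, consistent with $\calI_4$ in the linear case, so the plus sign in the statement is a typo that is harmless because only $|\calJ_4|$ is estimated afterwards.
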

\begin{proof}
Using the moment equation
\begin{equation*}
\pa_t j_f = - \nabla_x\cdot \intr v \otimes v f\,\dv -j_f,
\end{equation*}
along with \eqref{iberico}, we obtain
\begin{equation}\label{tbonesteak}
\begin{aligned}
&\ddt \calC[f] \\
&= \intt \lt(- (\gamma-1) \frac{c_{m,d}}{\gamma} \nabla_x \rho_f^\gamma \cdot \nabla_x \Phi_f + S_{m,f} : \nabla_x^2 \Phi_f - j_f\cdot \nabla_x\Phi_f + j_f \cdot \pa_t \nabla_x \Phi_f \rt) \,  \dx .
\end{aligned}
\end{equation}
The first term of \eqref{tbonesteak} can be rewritten as
\begin{align*}
    & -  (\gamma-1) \frac{c_{m,d}}{\gamma} \int_{\T^d} \nabla_x (\rho_f^\gamma - 1) \cdot \nabla_x \Phi_f \,\dx \\
    &=  (\gamma-1) \frac{c_{m,d}}{\gamma} \int_{\T^d} (\rho_f^\gamma - 1) \Delta_x (-\Delta_x)^{-1} (\rho_f - 1) \,\dx \\
    &= -  (\gamma-1) \frac{c_{m,d}}{\gamma} \int_{\T^d} (\rho_f^\gamma - 1) (\rho_f - 1) \,\dx,
\end{align*}
by using the mean-value theorem to obtain the last line. For the last term $j_f\cdot \p_t \nabla_x \Phi_f$ in \eqref{tbonesteak}, we use the continuity equation $\p_t \rho_f + \nabla_x\cdot j_f$ to obtain the form of $\calJ_4$.
\end{proof}

Notice that if $f$ obeys macroscopic bounds \eqref{paella}, then it is easy to see that
\begin{equation}\label{eq / Cstar}
\calJ_1 \le - C_\star \|\rho_f - 1\|_{L^2}^2.
\end{equation}
for some constant $C_\star>0$ depending on $\gamma,\lambda,m,d$.

\begin{lemma}[Estimate of $\calJ_2$] \label{lem:J2}
    We assume that $f$ satisfies the conditional bounds \eqref{paella}. Then for any $\eta>0$, there exists $C=C_{\eta,m,d,E,\Lambda_m}>0$ such that
    \begin{equation*}
        \calJ_2 \le C \lt( \calD_m[f] + \calH_m[f|M_{m,\rho_f}] \rt) + \eta \|\rho_f - 1\|_{L^2}^2.
    \end{equation*}
\end{lemma}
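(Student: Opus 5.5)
The plan is to decompose $S_{m,f}$ into a part controlled by the entropy dissipation and a scalar part controlled by the local relative entropy. Then I would close with Young's inequality and the Calder\'on--Zygmund bound $\|\nabla_x^2\Phi_f\|_{L^2}\lesssim_d\|\rho_f-1\|_{L^2}$, as in the linear case.

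\emph{Step 1: splitting $S_{m,f}$.} Set $\psi_f:=\frac{m}{m-1}f^{m-1}+\frac{|v|^2}{2}$, so that $f\nabla_v\psi_f=vf+\nabla_v f^m$. Integrating by parts in $v$ gives
\[
\intr v\otimes (vf+\nabla_v f^m)\,\dv=\intr v\otimes v f\,\dv-\Big(\intr f^m\,\dv\Big)\bbI_d .
\]
Combined with \eqref{iberico}, this yields
\[
S_{m,f}=A_f+B_f\,\bbI_d,\qquad A_f:=\intr v\otimes (f\nabla_v\psi_f)\,\dv,\qquad B_f:=\intr f^m\,\dv-\intr M_{m,\rho_f}^m\,\dv .
\]
By Cauchy--Schwarz, exactly as for $S_f$ in the linear case, $|A_f|\le (\intr|v|^2f\,\dv)^{1/2}D_m[f]^{1/2}\le E^{1/2}D_m[f]^{1/2}$.

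\emph{Step 2: controlling $B_f$ (the main point).} I would use a virial-type identity. Taking the trace of the identity in Step 1 gives
\[
T_f:=\operatorname{tr}A_f=\intr|v|^2 f\,\dv-d\intr f^m\,\dv,
\]
hence
\[
H_m[f]=\Big(\tfrac d2+\tfrac1{m-1}\Big)\intr f^m\,\dv+\tfrac12 T_f .
\]
For $M_{m,\rho_f}$ the analogue of $T$ vanishes by \eqref{iberico}. Subtracting the two identities gives
\[
B_f=\Big(\tfrac d2+\tfrac1{m-1}\Big)^{-1}\Big(H_m[f|M_{m,\rho_f}]-\tfrac12T_f\Big),
\]
with $|T_f|\le d^{1/2}E^{1/2}D_m[f]^{1/2}$.

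To obtain a quadratic bound I need $B_f^2$, not $B_f$. Here I would use that $H_m[f|M_{m,\rho_f}]\le H_m[f]\le E/2+\Lambda_m/(m-1)$ by \eqref{paella} and $H_m[M_{m,\rho_f}]\ge0$. Therefore $H_m[f|M_{m,\rho_f}]^2\lesssim_{E,\Lambda_m,m}H_m[f|M_{m,\rho_f}]$, and so
\[
B_f^2\lesssim_{m,d,E,\Lambda_m} H_m[f|M_{m,\rho_f}]+D_m[f].
\]
This is the step I expect to be the crux, since it is where the bound on $\intr f^m\,\dv$ enters.

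\emph{Step 3: conclusion.} We have $|S_{m,f}|^2\le 2|A_f|^2+2d\,B_f^2\lesssim D_m[f]+H_m[f|M_{m,\rho_f}]$ pointwise in $x$. Then
\[
\calJ_2\le \frac{1}{2\e}\intt|S_{m,f}|^2\,\dx+\frac{\e}{2}\|\nabla_x^2\Phi_f\|_{L^2}^2\le \frac{C}{\e}\big(\calD_m[f]+\calH_m[f|M_{m,\rho_f}]\big)+\frac{\e C_d^2}{2}\|\rho_f-1\|_{L^2}^2 .
\]
Choosing $\e=2\eta/C_d^2$ gives the claimed estimate with $C=C_{\eta,m,d,E,\Lambda_m}$.
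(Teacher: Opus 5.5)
Your proof is correct. It shares the paper's skeleton: the same splitting $S_{m,f}=A_f+B_f\,\bbI_d$, Cauchy--Schwarz for $A_f$, and Young plus Calder\'on--Zygmund at the end. Where it differs is the scalar part $B_f$, which you handle by a genuinely different argument.

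The paper bounds $|B_f|$ by the stronger quantity $\intr|f^m-M_{m,\rho_f}^m|\,\dv$ and controls it by relative entropy alone, via \eqref{carbonara}. That argument uses a Taylor expansion of $\Psi_m$, the comparison \eqref{eq:elementary}, the pointwise bound $mM_{m,\rho_f}^{m-1}\le(m-1)c_{m,d}\rho_f^{\gamma-1}$, and the mass-rescaled Csisz\'ar--Kullback inequality of Lemma \ref{tianyuan}. The result is $|B_f|\lesssim H_m[f|M_{m,\rho_f}]+H_m[f|M_{m,\rho_f}]^{1/2}$, and the upper bound on $H_m[f]$ then reduces this to the square-root term.

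You instead use an exact virial identity. Since $\frac d2+\frac1{m-1}=(\gamma-1)^{-1}$, it reads $B_f=(\gamma-1)\bigl(H_m[f|M_{m,\rho_f}]-\tfrac12T_f\bigr)$, where $T_f$ is a dissipation term of the same type as $A_f$.

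Your route is more elementary and self-contained. It uses only integration by parts, \eqref{iberico}, the minimality of $M_{m,\rho_f}$ (to get $0\le H_m[f|M_{m,\rho_f}]\le H_m[f]$) and Cauchy--Schwarz. It needs no Csisz\'ar--Kullback inequality, no rescaling and no tracking of powers of $\rho_f$. It works because only the signed integral of $f^m-M_{m,\rho_f}^m$ enters $S_{m,f}$, and because $\calD_m[f]$ is already permitted on the right-hand side.

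The paper's route yields more information: a dissipation-free, pointwise-in-$x$ bound on $\|f^m-M_{m,\rho_f}^m\|_{L^1_v}$ in terms of the local relative entropy. This lemma does not need that, but it could be reused in settings where no dissipation is available.
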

\begin{proof}
We establish pointwise estimates for $S_{m,f}$. Invoking the identities in \eqref{iberico}, we get
\[
\begin{aligned}
S_{m,f} = \intr v \otimes v (f - M_{m,\rho_f})\,\dv &= \intr v\otimes v f\,\dv - \lt(\intr M_{m,\rho_f}^m \,\dv \rt) \bbI_d   \\
&= \intr \Big[ v\otimes v f - f^m \bbI_d \Big] \,\dv + \lt(\intr f^m- M_{m,\rho_f}^m\,\dv\rt)\bbI_d \\
&= -\intr (\nabla_v f^m + vf)\otimes v \,\dv + \lt(\intr f^m- M_{m,\rho_f}^m\,\dv\rt)\bbI_d \\
&=:S_{m,f}^1 + S_{m,f}^2.
\end{aligned}
\]
By applying the Cauchy-Schwarz inequality, the first term can be controlled as
\[
\begin{aligned}
|S_{m,f}^1|=\lt|\intr (\nabla_v f^m + vf)\otimes v \,\dv\rt| &\le \lt(\intr |v|^2f\,\dv\rt)^{\frac{1}{2}}\lt(\intr \frac{|\nabla_v f^m + vf|^2}{f}\,\dv\rt)^{\frac12} \\
&\lesssim_E {D_m[f]}^{\frac12},
\end{aligned}
\]
where the last inequality follows from the macroscopic bounds \eqref{paella}.
To handle the second term $S_{m,f}^2$, we use the following inequality (see Section \ref{app} for its proof)
\begin{equation}\label{carbonara}
\intr |f^m - M_{m,\rho_f}^m|\,\dv \le 
C_m H_m[f|M_{m,\rho_f}] + C_{m,d} H_m[f|M_{m,\rho_f}]^{\frac{1}{2}} . 
\end{equation}
Using that $H_m[f|M_{m,\rho_f}]\le H_m[f]$, along with \eqref{paella}, we deduce from \eqref{carbonara}
\begin{equation*}
|S_{m,f}^2| \le \intr |f^m - M_{m,\rho_f}^m|\,\dv \lesssim_{m,d,E,\Lambda_m} H_m[f|M_{m,\rho_f}]^{\frac{1}{2}}.
\end{equation*}
Hence,
\[
\begin{aligned}
|\calJ_2| &\lesssim_{m,E,\Lambda_m} \lt\| D_m[f]^{\frac{1}{2}} + H_m[f|M_{m,\rho_f}]^{\frac{1}{2}} \rt\|_{L^2} \|\nabla_x^2 \Phi_f\|_{L^2} \\
&\lesssim_d (\calD_m[f] + \calH_m[f|M_{m,\rho_f}])^{\frac{1}{2}}  \|\rho_f - 1\|_{L^2} \\
&\le C_\eta (\calD_m[f] + \calH_m[f|M_{m,\rho_f}]) + \eta \|\rho_f - 1\|_{L^2}^2 \quad \forall \eta > 0,
\end{aligned}
\]
where we used the Cauchy--Schwarz, Calder\'on--Zygmund, and Young's inequalities.
\end{proof}

\begin{lemma}[Estimates of $\calJ_3$ and $\calJ_4$]
    We assume that $f$ satisfies the conditional bounds \eqref{paella}. Then for each $\eta>0$, there exists $C=C_{\Lambda,d}>0$ such that
    \begin{equation*}
        \calJ_3 + \calJ_4 \le C\calH_m[f|M_{m,\rho_f}] + \eta \|\rho_f - 1\|_{L^2}^2.
    \end{equation*}
\end{lemma}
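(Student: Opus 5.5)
The plan mirrors Lemma \ref{lem:lin:i3i4}. Everything reduces to one pointwise bound: $|j_f|^2$ is controlled by the relative entropy with respect to the \emph{local} Barenblatt profile $M_{m,\rho_f}$, not the global one $M_m$.

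\textbf{Key pointwise bound.} I would extract it from the minimization principle \eqref{eq: min pri}. The right-hand side of \eqref{eq: min pri} has $\frac{c_{m,d}}{\gamma}\rho_f^\gamma = H_m[M_{m,\rho_f}]$ as its second term. So subtracting $H_m[M_{m,\rho_f}]$ from both sides gives, wherever $\rho_f>0$,
\[
H_m[f|M_{m,\rho_f}] = H_m[f] - H_m[M_{m,\rho_f}] \ge \frac{|j_f|^2}{2\rho_f}.
\]
By the bound $\rho_f\le \Lambda$ from \eqref{paella}, this yields
\[
|j_f|^2 \le 2\Lambda\, H_m[f|M_{m,\rho_f}],
\]
and integrating over $\T^d$ gives $\|j_f\|_{L^2}^2 \le 2\Lambda\,\calH_m[f|M_{m,\rho_f}]$. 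This is the nonlinear analogue of \eqref{eq: j2}. It needs only the upper bound on $\rho_f$; the lower bound $\lambda>0$ merely guarantees $\rho_f>0$, so the division is legitimate everywhere.

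\textbf{Estimate of $\calJ_3$.} I would apply Cauchy--Schwarz, then the energy estimate $\|\nabla_x\Phi_f\|_{L^2}\lesssim_d \|\rho_f-1\|_{L^2}$ as in \eqref{eq:blueberry}, and then Young's inequality with parameter $\eta$:
\[
|\calJ_3| \le \|j_f\|_{L^2}\|\nabla_x\Phi_f\|_{L^2} \le C_d (2\Lambda)^{1/2}\calH_m[f|M_{m,\rho_f}]^{1/2}\|\rho_f-1\|_{L^2} \le \frac{C_d^2\Lambda}{2\eta}\calH_m[f|M_{m,\rho_f}] + \eta\|\rho_f-1\|_{L^2}^2 .
\]
The constant therefore also depends on $\eta$. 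The statement writes $C=C_{\Lambda,d}$, and I would read this as allowing $\eta$-dependence. That is harmless, since $\eta$ will later be fixed in terms of $C_\star$ from \eqref{eq / Cstar}.

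\textbf{Estimate of $\calJ_4$.} The operator $\nabla_x(-\Delta_x)^{-1}\nabla_x\cdot$ is a zeroth-order Fourier multiplier on $\T^d$ (a projection onto gradient fields). It is therefore bounded on $L^2$ with norm $\le 1$, which follows directly from Plancherel. Hence
\[
|\calJ_4| \le \|j_f\|_{L^2}^2 \le 2\Lambda\,\calH_m[f|M_{m,\rho_f}].
\]
Notice that the sign of $\calJ_4$ in the preceding lemma differs from the linear case, but this is irrelevant because we bound the absolute value.

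\textbf{Combining and the main obstacle.} Adding the two estimates gives the claim. The only genuinely nonlinear step is the pointwise bound on $|j_f|^2$. Its source is \eqref{eq: min pri}, which is a minimization principle: the Barenblatt with shifted mean velocity $u_f=j_f/\rho_f$ minimizes $H_m$ under fixed mass and momentum, and its energy exceeds that of $M_{m,\rho_f}$ exactly by $|j_f|^2/(2\rho_f)$. The point to check is that \eqref{eq: min pri} is applied with the local profile's entropy $\frac{c_{m,d}}{\gamma}\rho_f^\gamma$, and not with the global-equilibrium version that appears in the proof of Lemma \ref{lem: comp}.
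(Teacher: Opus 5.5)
Your proof is correct and takes the same route as the paper. The paper says the estimate follows as in Lemma \ref{lem:lin:i3i4}: control $\|j_f\|_{L^2}^2$, then use Cauchy--Schwarz, the Poisson estimate and Young for $\calJ_3$, and the $L^2$-boundedness of $\nabla_x(-\Delta_x)^{-1}\nabla_x\cdot$ for $\calJ_4$. Your derivation of the \emph{local} bound $|j_f|^2\le 2\Lambda H_m[f|M_{m,\rho_f}]$ directly from \eqref{eq: min pri} is exactly the step this requires; the paper instead cites \eqref{eq/j2}, which only bounds $\|j_f\|_{L^2}^2$ by the global $\calH_m[f|M_m]$. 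You are also right that $C$ must depend on $\eta$.
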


\begin{proof}
    By invoking \eqref{eq/j2}, this follows in exactly the same way as with Lemma \ref{lem:lin:i3i4}. 
\end{proof}

\begin{proof}[Proof of Proposition \ref{prop: corrector}]
We now fix $\eta:= C_\star/4>0$ where $C_\star$ is defined in \eqref{eq / Cstar}. Collecting the previous three lemmas, we then obtain that 
\begin{align*}
    \ddt \calC[f] \le -\frac{C_\star}{2} \|\rho_f - 1\|_{L^2}^2 + N^{(2)} \calD_m[f] + N^{(3)} \calH_m[f|M_{m,\rho_f}],
\end{align*}
for some constants $N^{(2)},N^{(3)}$ depending on $\gamma, m, d$ and the conditional bounds \eqref{paella}.
We then invoke Lemma \ref{lem:babym} to conclude to the assertion of Proposition \ref{prop: corrector}.
\end{proof}




\subsection{Proof of \eqref{carbonara}} \label{app}

Here, we prove the technical inequality \eqref{carbonara} used in the proof of Lemma \ref{lem:J2}. We define $\Psi_m(z)=z^m/(m-1)$ for $z \in \R_+$, and then an entropy and a relative entropy with respect to $\Psi_m$:
\[
E_m[f]:=\intr \Psi_m(f)\,\dv,\quad E_m[f|g]:= \intr \lt( \Psi_m(f) - \Psi_m(g) - \Psi_m'(g)(f-g) \rt) \,\dv.
\]
We then collect some inequalities: First, from \cite[Proposition 5]{CJMTU01}, we know that for any $f \in L^1_2\cap L^m_+(\R^n)$, it holds that
\begin{equation}\label{eq:elementary}
H_m[f|M_{m,\rho_f}] \ge E_m[f|M_{m,\rho_f}].
\end{equation}
Second, we state here a generalized Csisz\'ar--Kullback inequality. Albeit this has been well-known (see \cite{CT00}, \cite[Section 6]{CJMTU01}, \cite[Chapter 3]{Matthes}, and \cite{ACM26} for the most recent improvement), we believe the literature results have addressed the case of unit mass. Here we treat the case of general mass to exhibit dependence on mass by simply resorting to a scaling argument.

\begin{lemma}[Generalized Csisz\'ar--Kullback inequality]\label{tianyuan}
    For any $f \in L^1_2\cap L^m_+(\R^d)$,
\begin{equation*}
    \|f-M_{m,\rho_f}\|_{L^1} \le  C_{m,d} \rho_f^{-(\gamma-1)} H_m[f|M_{m,\rho_f}]^{\frac{1}{2}},
\end{equation*}
where $\gamma>1$ is defined as in \eqref{lmm}.
\end{lemma}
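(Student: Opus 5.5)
The plan is to reduce, by scaling, to the unit-mass generalized Csisz\'ar--Kullback inequality known from the literature, which I take as given:
\[
\|g-M_m\|_{L^1}\le C_{m,d}\,H_m[g|M_m]^{1/2}
\]
for every $g\in L^1_2\cap L^m_+(\R^d)$ with $\intr g\,\dv=1$ (see \cite{CT00,CJMTU01}). Given $f$ with mass $\rho:=\rho_f>0$, I will write the local Barenblatt profile as a rescaled unit-mass Barenblatt:
\[
M_{m,\rho}(v)=\rho\,\sigma^{-d}M_m(v/\sigma)
\]
for a suitable $\sigma=\sigma(\rho)$. Comparing the supports $|v|^2\le 2c_{m,d}\rho^{\gamma-1}$ gives $\sigma=\rho^{(\gamma-1)/2}$. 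One then checks that the amplitude matches: $\big(\tfrac{m-1}{m}c_{m,d}\rho^{\gamma-1}\big)^{1/(m-1)}$ should equal $\rho\sigma^{-d}M_m(0)$. This reduces to the identity
\[
\frac{\gamma-1}{m-1}=1-\frac{d(\gamma-1)}{2},
\]
which follows from the definition of $\gamma$ in \eqref{lmm}.

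With this, I define the rescaled density
\[
g(w):=\rho^{-1}\sigma^{d}f(\sigma w),
\]
which has unit mass, and I compute $H_m[f]$ in terms of $g$. Changing variables $v=\sigma w$:
\begin{itemize}
\item the kinetic part gives $\intr \tfrac12|v|^2f\,\dv=\rho\sigma^2\intr\tfrac12|w|^2g\,dw$;
\item the nonlinear part gives $\intr \tfrac{f^m}{m-1}\dv=\rho^m\sigma^{-d(m-1)}\intr\tfrac{g^m}{m-1}dw$.
\end{itemize}
The same identity as above shows that $\rho^m\sigma^{-d(m-1)}=\rho\sigma^2=\rho^\gamma$. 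Both parts therefore scale by the same factor, so $H_m[f]=\rho^\gamma H_m[g]$. Applying this to $f=M_{m,\rho}$, whose rescaling is $M_m$, also recovers $H_m[M_{m,\rho}]=\rho^\gamma H_m[M_m]$, consistent with $c_{m,d}\rho^\gamma/\gamma$. Hence
\[
H_m[f|M_{m,\rho}]=\rho^\gamma H_m[g|M_m].
\]

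Finally, the $L^1$ norm is invariant under this mass-preserving dilation up to the mass factor: $\|f-M_{m,\rho}\|_{L^1}=\rho\|g-M_m\|_{L^1}$. Combining with the unit-mass inequality gives
\[
\|f-M_{m,\rho}\|_{L^1}\le C_{m,d}\,\rho\,\rho^{-\gamma/2}H_m[f|M_{m,\rho}]^{1/2}=C_{m,d}\,\rho^{1-\gamma/2}H_m[f|M_{m,\rho}]^{1/2}.
\]

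The main obstacle is the exponent. The scaling computation naturally produces $\rho^{1-\gamma/2}$, whereas the statement reads $\rho^{-(\gamma-1)}$. These agree only when $\gamma=4/3$, so I will recheck the scaling carefully. If the discrepancy persists, I will report the exponent $1-\gamma/2$ that the argument actually yields; this is harmless for the later application, where $\lambda\le\rho_f\le\Lambda$ makes any power of $\rho_f$ a bounded constant. The other point to verify is that the cited unit-mass inequality is stated relative to the centred Barenblatt $M_m$. If instead it is stated relative to the Barenblatt matching both mass and mean velocity, I will add the drift contribution, which is controlled by $|j_f|^2/\rho_f\le 2H_m[f|M_{m,\rho_f}]$ via \eqref{eq: min pri}.
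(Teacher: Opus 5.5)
Your argument is the paper's argument. Its rescaling $f_*(v)=\rho_f^{-a}f(\rho_f^{b}v)$, with $a=\frac{\gamma-1}{m-1}$ and $b=\frac{\gamma-1}{2}$, is exactly your $g$ with $\sigma=\rho_f^{(\gamma-1)/2}$. The paper also obtains $H_m[f_*]=\rho_f^{-\gamma}H_m[f]$; its exponent $\frac{d(m-1)+2m}{d(m-1)+2}$ is precisely $\gamma$. Your computation is correct, and $\rho_f^{1-\gamma/2}$ is the right power. The mismatch is an error in the paper, not in your proof.

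In its final display the paper writes $\rho_f\|f_*-M_m\|_{L^1}\le C_{m,d}\,\rho_f\,H_m[f_*|M_m]$, dropping the square root of the unit-mass Csisz\'ar--Kullback inequality. That is the only source of $\rho_f^{1-\gamma}=\rho_f^{-(\gamma-1)}$. Keeping the square root gives $\rho_f\,(\rho_f^{-\gamma}H_m[f|M_{m,\rho_f}])^{1/2}=\rho_f^{1-\gamma/2}H_m[f|M_{m,\rho_f}]^{1/2}$, as you found.

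Moreover, the statement as written is false for large mass. Apply it to the rescalings $\rho\sigma^{-d}g(\cdot/\sigma)$ of a fixed unit-mass $g\neq M_m$ with finite entropy. It would yield $\|g-M_m\|_{L^1}\le C_{m,d}\,\rho^{-\gamma/2}H_m[g|M_m]^{1/2}$ for every $\rho>0$, which fails as $\rho\to\infty$. For $\rho_f\le 1$ it is merely weaker than your bound. So you should state and prove the lemma with $\rho_f^{1-\gamma/2}$.

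Three comments on your side remarks:
\begin{enumerate}
\item The exponents $1-\gamma/2$ and $-(\gamma-1)=1-\gamma$ never agree for $\gamma>1$. Your value $\gamma=4/3$ solves $1-\gamma/2=\gamma-1$, so a sign was lost.
\item The drift contingency is unnecessary. The unit-mass inequality you cite is relative to the unique mass-constrained minimizer of $H_m$, which is the centred $M_m$ because of the confining term $\frac12|v|^2$.
\item Your assessment of the downstream effect is right. In the proof of \eqref{carbonara}, the pointwise bound $mM_{m,\rho_f}^{m-1}\le(m-1)c_{m,d}\rho_f^{\gamma-1}$ now combines with the corrected lemma to give a factor $\rho_f^{\gamma/2}$ instead of $1$. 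This factor is at most $\Lambda^{\gamma/2}$. Alternatively, via \eqref{eq: min pri} and \eqref{paella}, it is at most $\bigl(\frac{\gamma}{c_{m,d}}(\frac{E}{2}+\frac{\Lambda_m}{m-1})\bigr)^{1/2}$. Hence Lemma~\ref{lem:J2}, even with its stated dependence on $m,d,E,\Lambda_m$ only, and everything after it go through unchanged.
\end{enumerate}
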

\begin{proof}
    Let us first define an appropriate scaling on $L^1_2 \cap L^m_+(\R^d)$, based upon the relations satisfied by Barenblatt profiles. Writing
    \begin{equation*}
        f_*(v) := \rho_f^{-a} f\lt( \rho_f^b v \rt),
    \end{equation*}
    we observe that with the choice of $b = \frac{\gamma-1}{2}$, there holds
    \begin{align*}
        (M_{m,\rho_f})_*(v) &= \rho_f^{-a} \lt( \frac{m-1}{m} \lt( c_{m,d} \rho_f^{\gamma-1} - \frac{| \rho_f^{b} v|^2}{2} \rt) \rt)_+^{\frac{1}{m-1}} \\
        &= \rho_f^{-a} \rho_f^{\frac{\gamma-1}{m-1}} \lt(\frac{m-1}{m} \lt( c_{m,d} - \frac{|v|^2}{2}\rt) \rt)_+^{\frac{1}{m-1}} \\
        &= \rho_f^{-a + \frac{\gamma-1}{m-1}} M_m(v).
    \end{align*}
    Thus, choosing $a=\frac{\gamma-1}{m-1}$, we obtain that
    \begin{equation*}
        (M_{m,\rho_f})_*(v) = M_m(v).
    \end{equation*}
    
    With these choices of $a,b$, we note that for any $f\in L^1_2 \cap L^m_+(\R^d)$, there holds
    \begin{equation*}
        \intr f_*(v) \,\dv = 1,
    \end{equation*}
    as well as
    \begin{equation}\label{boon soo}
        \frac{\iinttr f_*^m\,\dx\dv}{\iinttr f^m\,\dx\dv} = \frac{\iinttr |v|^2 f_*\,\dx\dv}{\iinttr |v|^2 f \,\dx\dv}.
    \end{equation}
    We further remark that \eqref{boon soo} guarantees that the entropies of $f_*$ and $f$ are comparable. More specifically:
    \begin{align*}
        H_m[f_*] = \rho_f^{-\frac{d(m-1)+2m}{d(m-1)+2}} H_m[f].
    \end{align*}

    Consequently, by rescaling the Csisz\'ar--Kullback inequality known to hold for unit mass functions, we obtain for some absolute constant $C_{m,d}>0$ that
    \begin{equation*}
    \begin{split}
        \|f - M_{m,\rho_f}\|_{L^1} = \rho_f \|f_* - M_m\|_{L^1} \le C_{m,d} \rho_f H_m[f_*|M_m] &= C_{m,d} \rho_f^{1-\frac{d(m-1)+2m}{d(m-1)+2}} H_m[f|M_{m,\rho_f}] \\
        &= C_{m,d} \rho_f^{-(\gamma-1)} H_m[f|M_{m,\rho_f}].
    \end{split}
    \end{equation*}
\end{proof}

We now prove \eqref{carbonara}.

\begin{lemma}
Let $m > 1$. For any $f \in L^1_2\cap L^m_+(\R^d)$ such that $f^{m-\frac{1}{2}} \in \dot{H}^1(\R^d)$,
\[
\intr |f^m -M_{m,\rho_f}^m|\,\dv \le   (m-1)H_m[f|M_{m,\rho_f}] + C_{m,d}  H_m[f|M_{m,\rho_f}]^{\frac{1}{2}}.
\]
\end{lemma}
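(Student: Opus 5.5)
Write $M:=M_{m,\rho_f}$ and $\Psi_m(z)=z^m/(m-1)$. The plan is to split $f^m-M^m$ into a Bregman-divergence part plus a linear part. Pointwise,
\begin{equation*}
f^m - M^m = (m-1)\Bigl(\Psi_m(f)-\Psi_m(M)-\Psi_m'(M)(f-M)\Bigr) + m M^{m-1}(f-M),
\end{equation*}
since $(m-1)\Psi_m'(M) = mM^{m-1}$.

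The bracket is nonnegative because $\Psi_m$ is convex. Taking absolute values and integrating in $v$ therefore gives
\begin{equation*}
\intr |f^m-M^m|\,\dv \le (m-1)E_m[f|M] + m\intr M^{m-1}|f-M|\,\dv .
\end{equation*}
By \eqref{eq:elementary}, the first term is bounded by $(m-1)H_m[f|M_{m,\rho_f}]$. This is exactly the first term in the claim.

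For the second term, use the explicit profile \eqref{lmm}. It gives $mM^{m-1} = (m-1)\bigl(c_{m,d}\rho_f^{\gamma-1}-|v|^2/2\bigr)_+ \le (m-1)c_{m,d}\rho_f^{\gamma-1}$. Hence
\begin{equation*}
m\intr M^{m-1}|f-M|\,\dv \le (m-1)c_{m,d}\,\rho_f^{\gamma-1}\|f-M_{m,\rho_f}\|_{L^1}.
\end{equation*}
Now apply the generalized Csisz\'ar--Kullback inequality, Lemma \ref{tianyuan}. Its prefactor $\rho_f^{-(\gamma-1)}$ cancels the factor $\rho_f^{\gamma-1}$ exactly, leaving $C_{m,d}H_m[f|M_{m,\rho_f}]^{1/2}$ with no dependence on $\rho_f$. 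This is the second term in the claim.

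The main point to watch is this cancellation of the $\rho_f$-powers. It is why the mass-dependent scaling in Lemma \ref{tianyuan} was set up, and it needs the square-root form stated there. (The proof's final display omits the exponent $\tfrac12$; the square root follows from the classical unit-mass inequality with $H^{1/2}$ together with the scaling $H_m[f_*] = \rho_f^{-\frac{d(m-1)+2m}{d(m-1)+2}} H_m[f]$.)

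The regularity assumption $f^{m-\frac12}\in\dot H^1$ is not used in these algebraic steps. It only guarantees that all the quantities above are finite and that \eqref{eq:elementary} applies.
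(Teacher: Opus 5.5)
Your decomposition, the use of \eqref{eq:elementary}, and the pointwise bound $mM_{m,\rho_f}^{m-1}\le (m-1)c_{m,d}\rho_f^{\gamma-1}$ follow the paper's argument exactly and are fine. The gap is the step you single out yourself: the cancellation of the powers of $\rho_f$. You correctly noticed that the last display in the proof of Lemma \ref{tianyuan} drops the square root, but restoring it changes the exponent. With $a,b$ as there, $\|f-M_{m,\rho_f}\|_{L^1}=\rho_f\|f_*-M_m\|_{L^1}$ and $H_m[f_*|M_m]=\rho_f^{-\gamma}H_m[f|M_{m,\rho_f}]$, since $\frac{d(m-1)+2m}{d(m-1)+2}=\gamma$. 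The unit-mass inequality $\|g-M_m\|_{L^1}\le C_{m,d}H_m[g|M_m]^{1/2}$ therefore yields
\[
\|f-M_{m,\rho_f}\|_{L^1}\le C_{m,d}\,\rho_f^{1-\frac{\gamma}{2}}\,H_m[f|M_{m,\rho_f}]^{\frac12}.
\]
The prefactor $\rho_f^{-(\gamma-1)}$ is what the scaling produces only for the bound linear in $H_m$ written in that display, which is not a Csisz\'ar--Kullback inequality. Inserting the correct version into your estimate of $m\intr M_{m,\rho_f}^{m-1}|f-M_{m,\rho_f}|\,\dv$ leaves $C_{m,d}\,\rho_f^{\gamma/2}H_m[f|M_{m,\rho_f}]^{1/2}$, so no cancellation occurs.

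This cannot be repaired inside the argument, because the statement itself is false with a constant independent of $\rho_f$. Take a unit-mass $g=M_m+\e\phi$ with $\phi$ smooth, supported where $M_m\ge\delta>0$, and $\intr\phi\,\dv=0$. For small $\e>0$, $g$ is nonnegative and satisfies all hypotheses of the lemma. Since $\frac{|v|^2}{2}+\frac{m}{m-1}M_m^{m-1}=c_{m,d}$ on the support of $M_m$, the first variation of $H_m$ at $M_m$ in the direction $\phi$ vanishes. Hence $h:=H_m[g|M_m]=O(\e^2)$, while $A:=\intr|g^m-M_m^m|\,\dv=\e\, m\intr M_m^{m-1}|\phi|\,\dv+O(\e^2)$. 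Now set $f(v):=\rho^{a}g(\rho^{-b}v)$. Then $\rho_f=\rho$ and $M_{m,\rho}(v)=\rho^{a}M_m(\rho^{-b}v)$, and both $\intr|f^m-M_{m,\rho}^m|\,\dv$ and $H_m[f|M_{m,\rho}]$ equal $\rho^{\gamma}$ times their values for $g$. The claimed inequality then reads $A\le (m-1)h+C_{m,d}\,\rho^{-\gamma/2}h^{1/2}$, which fails as $\rho\to\infty$ once $\e$ is small. What is true is the bound with $C_{m,d}\,\rho_f^{\gamma/2}$ in front of the square root. Under $\rho_f\le\Lambda$ from \eqref{paella} this factor is a constant depending on $\Lambda$, which is all that Lemma \ref{lem:J2} needs; its constant then also depends on $\Lambda$, and the main theorem is unaffected.
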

\begin{proof}
We first notice that
\[
\begin{aligned}
&f^m -M_{m,\rho_f}^m \\
&= (m-1)\lt(\Psi_m(f)-\Psi_m(M_{m,\rho_f})- \Psi_m'(M_{m,\rho_f})(f-M_{m,\rho_f})\rt) + mM_{m,\rho_f}^{m-1}(f-M_{m,\rho_f}),
\end{aligned}
\]
so that
\begin{equation}\label{chicago pizza}
\intr |f^m -M_{m,\rho_f}^m|\,\dv \le (m-1) E_m[f|M_{m,\rho_f}]+\intr mM_{m,\rho_f}^{m-1}|f-M_{m,\rho_f}|\,\dv.
\end{equation}
We first recall \eqref{eq:elementary} to see $(m-1)E_m[f|M_m]\le (m-1)H_m[f|M_m]$. For the second term on the right-hand side of \eqref{chicago pizza},  the definition of $M_{m,\rho_f}$ in \eqref{lmm} leads us to note that $mM_{m,\rho_f}^{m-1} \le (m-1)c_{m,d}\rho_f^{\gamma-1}$ holds pointwise. Thus, we can combine this with Lemma \ref{tianyuan} to estimate
\[
\intr mM_{m,\rho_f}^{m-1}|f-M_{m,\rho_f}|\,\dv \lesssim_{m,d} H_m[f|M_{m,\rho_f}]^{\frac{1}{2}}.
\]
Collecting all, we conclude to \eqref{carbonara}. 
\end{proof}

\section{Proof of  Theorem \ref{thm: main}} \label{sec: thm}
We now present the proof of Theorem \ref{thm: main} in a framework uniform in $m\in [1,\infty)$. We assume that $t \ge T_0$ so that the conditional bounds \eqref{paella} are valid. We apply the results of Proposition \ref{prop: linFP} or Proposition \ref{prop: corrector}, depending on whether $m=1$ or $m>1$. Due to the entropy inequality and logarithmic Sobolev inequalities \eqref{eq:LSI1}--\eqref{eq:LSIm}, we have
\[
\ddt \calH_m[f|M_m] = \ddt \calH_m[f] \le -\frac{1}{2}\calD_m[f] -\calH_m[f|M_{m,\rho_f}].
\]
We now choose small $\delta>0$ satisfying $\max\{N^{(0)
},N^{(2)
},N^{(3)
}\}\delta \le \frac{1}{2}$. Thanks to either Lemma \ref{lem: m=1 comp} or Lemma \ref{lem: comp}, we have in this way that
\begin{equation}\label{friends}
\frac{1}{2}\calH_m[f|M_m]\le \calH_m[f|M_m]+\delta \calC[f]\le \frac{3}{2}\calH_m[f|M_m].
\end{equation}
On the other hand,
\[
\begin{aligned}
&\quad \ddt \lt(\calH_m[f|M_1] + \delta\calC[f]\rt) \\ &\le  -\frac{1}{2}\calD_m[f] -\calH_m[f|M_{m,\rho_f}] +\delta\lt( - N^{(1)} \calP_\gamma[\rho_f] + N^{(2)} \calD_m[f] + N^{(3)} \calH_m[f|M_{m,\rho_f}]\rt)  \\
&\le -N^{(1)} \delta \calP_\gamma[\rho_f] - \frac{1}{2}\calH_m[f|M_{m,\rho_f}] \le -\min\lt\{N^{(1)}\delta,\frac{1}{2}\rt\}\calH_m[f|M_m] \\
&\le -\mu\lt(\calH_m[f|M_m] + \delta\calC[f]\rt),
\end{aligned}
\]
where $\mu:=\frac{2}{3}\min\lt\{N^{(1)}\delta,\frac{1}{2}\rt\}>0$.
This proves
\[
\begin{aligned}
\lt(\calH_m[f|M_m] + \delta\calC[f]\rt)(t) &\le \lt(\calH_m[f|M_m] + \delta\calC[f]\rt)(T_0)e^{-\mu(t-T_0)}.
\end{aligned}
\]
Thanks to \eqref{friends},
\[
\calH_m[f(t)|M_m] \le 3\calH_m[f(T_0)|M_m]e^{-\mu (t-T_0)}.
\]
Finally, the Csisz\'ar--Kullback inequality yields (see Remark \ref{rem / csis} below)
\[
\|f(t)-M_m\|_{L^1(\T^d\times \R^d)} \le C_{m,d}\calH_m[f(T_0)|M_m]^{\frac{1}{2}}e^{-\frac{\mu}{2}(t-T_0)}.
\]

\begin{remark} \label{rem / csis}
    The Csisz\'ar--Kullback inequality in the kinetic setting, that is, the fact that for every sufficiently integrable $f$ with $\|f\|_{L^1(\T^d\times\R^d)} = 1$ there holds
    \begin{equation*}
        \|f - M_m\|_{L^1(\T^d\times\R^d)} \le C_{m,d} \calH_m[f|M_m]^{1/2},
    \end{equation*}
    is not explicitly stated in the literature. However, we remark that it can be obtained straightforwardly by repeating the proofs of \cite{CT00,CJMTU01,Matthes}, by replacing all of the integrals there with the double integral over $\T^d\times\R^d$.
\end{remark}

\bibliographystyle{siam}
\bibliography{f_m}

\end{document}